\renewcommand{\mid}{:}
\newcommand{\Lip}{\operatorname{Lip}}
\newcommand{\sde}{\textsc{sde}}
\newcommand{\spde}{\textsc{spde}}
\newcommand{\Ord}[1]{\ensuremath{\mathcal O\big(#1\big)}}
\newcommand{\D}[2]{\mathchoice
  {\frac{\partial #2}{\partial #1}}
  {{\partial #2}/{\partial #1}}
  {{\partial #2}/{\partial #1}}
  {{\partial #2}/{\partial #1}}
  }
\newcommand{\Dn}[3]{\mathchoice
  {\frac{\partial^#2 #3}{\partial #1^#2}}%
  {{\partial^#2 #3}/{\partial #1^#2}}%
  {{\partial^#2 #3}/{\partial #1^#2}}%
  {{\partial^#2 #3}/{\partial #1^#2}}%
  }
\newcommand{\DD}[2]{\Dn{#1}2{#2}}
\newcommand{\cK}{\mathcal K_a}
\newcommand{\cKo}{\mathcal K_0}
\newtheorem{theorem}{Theorem}
\newtheorem{lemma}[theorem]{Lemma}
\newtheorem{corollary}[theorem]{Corollary}
\newtheorem{definition}[theorem]{Definition}
\theoremstyle{remark}
\newtheorem{remark}{Remark}
\title{Center manifolds for stochastic evolution equations}
\author{Xiaopeng Chen
\thanks{Beijing International Center for Mathematical Research, Peking University, Beijing, \textsc{China}. \protect\url{mailto:
chenxiao002214336@yahoo.cn} }
\and
Anthony J. Roberts\thanks{School of Mathematical Sciences, University of Adelaide, Adelaide, \textsc{Australia}.
\protect\url{mailto:anthony.roberts@adelaide.edu.au}}
\and
Jinqiao Duan\thanks{Institute for Pure and Applied Mathematics (IPAM), University of California, Los Angeles, USA \& Department of Applied Mathematics, Illinois Institute of Technology,   Chicago, IL 60616, USA.
\protect\url{mailto:jduan@ipam.ucla.edu}}
}
\date{\today}
\begin{document}
\maketitle

\begin{abstract}
Stochastic invariant manifolds are crucial in modelling  the dynamical behavior of    dynamical systems under uncertainty.
Under the assumption of exponential trichotomy,  existence and smoothness of center manifolds for a class of  stochastic evolution equations with linearly multiplicative noise are proved.
The exponential attraction and approximation to center manifolds are  also discussed.
\end{abstract}

\paragraph{Mathematics Subject Classifications (2010)} Primary 60H15, 35R60; Secondary 37D10,
34D35.

\paragraph{Keywords} Stochastic partial differential equations (\textsc{spde}s), exponential trichotomy,
center manifolds, stability,
dynamical approximation.

\tableofcontents

\section{Introduction}
Invariant manifolds are some of the most important invariant sets in nonlinear dynamical systems.
Stable, unstable, center and inertial manifolds have been widely studied in deterministic  systems.
But in many applications  the nonlinear  dynamical  systems are  influenced by  noise.
Recently invariant manifolds for stochastic differential equations and stochastic partial differential equations have been  explored.
Mohammed and Scheutzow~\cite{MohScheu99} focused
 on the existence of local  stable and unstable  manifolds for stochastic differential equations driven by semimartingales.
Boxler~\cite{Box}  obtained a stochastic
 version of   center manifold theorems for  finite dimensional random dynamical systems by using the multiplicative ergodic theorem.
Arnold~\cite{Arn98}  summarised various invariant manifolds  on  finite dimensional random dynamical systems.
Roberts~\cite{Rob1}   assumed  existence of  stochastic slow manifolds  for infinite stochastic partial differential equations in explorring  the  interactions of microscale noises and their macroscale modelling.
An unresolved problem is  the existence and nature of stochastic  invariant  manifolds  on infinite dimensional spaces.

Recently there are some theory developments about the problem.
Duan and others~\cite{Duan1, Duan2}  presented stable and unstable invariant manifolds for a class of stochastic partial differential
 equations under the assumption of exponential dichotomy or pseudo exponential dichotomy.  
Inertial manifolds are  generalization of center-unstable manifolds on   finite dimensional  spaces to  infinite dimensional spaces.
 Stochastic inertial manifolds on infinite dimensional spaces are constructed  by different methods~\cite{Ben, Car1, Prato}.
Center manifolds  are  important invariant manifolds.
It  has been  investigated extensively  for  infinite dimensional deterministic systems~\cite{Bat, Carr, Van}.
But there are only few papers  dealing  specifically with infinite dimensional center manifolds.
Gallay~\cite{Gal}  proved   the existence of  infinite dimensional center-stable manifolds and local center manifolds  for a class of  deterministic evolution equations  in Banach spaces.

This article establishes existence and properties of stochastic center manifolds  on infinite dimensional Hilbert spaces with multiplicative white noise in the following class of stochastic evolution equation
\begin{equation}\label{eq(2.1)}
    \frac{d u}{dt}=A u+F( u)+u\circ \dot{W}(t),
\end{equation}
where $u\in H$ is a Hilbert space typically defined on some spatial domain,  ${W}(t)$~is the standard $\mathbb{R}$-valued Wiener process on a probability space~$(\Omega,\mathcal{F}, \mathbb{P} )$,  which is only	dependent on  time.
A simple example is the linear stochastic parabolic equation
\begin{equation}\label{pa}
 u_{t}= u_{xx}+4u+u\circ\dot{W}(t), \quad u(0,t)=u(\pi,t)=0\,, \quad  t\geq 0\,.
\end{equation}
In this example the Hilbert space~$H$ is the function space~$L^2(0,\pi)$.
This example  stochastic partial differential equation (\textsc{spde}) has all three subspaces of interest:  stable,  center  and unstable subspaces.
Thus, the theory of Duan et al.~\cite{Duan1,Duan2} and Caraballo et al.~\cite{Car2} does not apply.
Sections~\ref{Sec2}--\ref{Sec4} establish the theory of center manifolds of such  \textsc{spde}s.
That is we prove the existence and smoothness of the center manifolds for  stochastic evolution equation~\eqref{eq(2.1)}.
The results generalize the work of Duan  et al.~\cite{Duan1, Duan2}, who proved the existence and smoothness of center-stable and center-unstable  manifolds.
The stochastic center manifolds we obtain  are generally  infinite dimensional.
Then  Section~\ref{Sec5} and~\ref{Sec6} proves the principle of  exponential attraction  from the stochastic center manifolds  and an approximation to the stochastic center manifolds.

Our main assumption is that there exists a spectral gap for the linear operator~$A$.
Such  a gap  allows us to construct the infinite dimensional  center manifolds.
Exponential dichotomy is one of the basic assumptions  in nonlinear dynamical systems.
The assumption plays a central role in the study of stable and unstable manifolds.
But the assumption of an exponential dichotomy~\cite{Duan1, Duan2}  is not a sufficient condition for proving the existence of center manifolds.
For example,  a  generator of a strongly continuous semigroup  has some of its spectrum on the imaginary axis,  which implies  the existence of  center manifolds, but Pr\"{u}ss~\cite{Pr} showed there is no exponential dichotomy.
The pseudo-exponential dichotomy~\cite{Duan1,Pli} is also  not a sufficient assumption to prove the existence  of  center manifolds since in the finite dimension case  we need to ensure  the existence of zero real part of the  spectrum of a bounded operator.
The concept of exponential trichotomy is important for center manifold theory  in infinite dimensional dynamical systems and non-autonomous systems~\cite{Bar, Gal, Pli}.
The existence of exponential trichotomy means  that the space is split  into three subspaces: center subspace, unstable subspace and stable subspace.
In  finite dimensional space  the exponential trichotomy is satisfied automatically.
Center manifold theory is based on the assumption of exponential trichotomy~\cite{Pli}.
Section~\ref{Sec2}  introduces the exponential trichotomy and conjugated random evolution equations.
Using the Lyapunov--Perron method, Section~\ref{Sec3}  proves the existence of   stochastic center manifolds  under the assumption of exponential trichotomy.
We  consider  the existence of local center manifolds  via a cut-off technique.
When the stable subspace or the unstable subspace disappear, then our theorems reduce to  the results by Duan and others ~\cite{Duan1, Duan2}.

Deterministic center manifold theory is widely studied and further extended by others in various contexts~\cite[e.g.]{Adi, Bat, Carr,Chi,  Har, Hen,  Van,  Van2}.
 The  method of  proving the existence of  center manifolds   is to show that the  center manifold is the fixed point of a certain of operator.
It is also the  intersection of the center-unstable manifolds and center-stable manifolds~\cite{Bat,Chi1}.
In this  paper,   we define the stochastic center manifold as a graph of Lipschitz map.
We directly prove the existence of such map by contraction mapping theorem, which is the method of  Lyapunov--Perron~\cite{Van2}.
It  is different from the Hadamard's graph transform method~\cite{Box,Duan2}, which is more directly geometrical.
Section~\ref{Sec4} proves the  center manifolds  are smooth  by using the method of Lyapunov--Perron.

There are  many applications of the theory of center manifolds such as the dimensional reduction~\cite{Du, Xu},  bifurcation~\cite{Carr},  discretisation~\cite{Rob, Rob2}. 
The stability  of  an equilibrium point  on  center manifolds  has been considered in deterministic case~\cite{Adi, Pal}.
In the random case
we show the  equilibrium  points are   asymptotically stable  when the equilibrium points restricted  on  the stochastic center manifolds  are  asymptotically stable  (Section~\ref{Sec5}).
We then prove that there  exists  an approximation of the stochastic centre manifold (Section~\ref{Sec6}).
There has been  some recent results on the approximation of invariant manifolds.
Wang and Duan~\cite{Wang}  proved an asymptotic completeness property  for a class of stochastic partial differential equations.
Sun et al.~\cite{Sun} showed that  an invariant manifold is approximated by  a deterministic manifold when the noise is small.
Chen et al.~\cite{Chen} gave a geometric shape of invariant manifolds for a class of stochastic partial differential equations.
Bl\"{o}mker and Haire~\cite{Bl}  considered a different approach to approximate  stochastic partial differential equations by amplitude equations.
Section~\ref{Sec6} gives a  class of stochastic approximation to random evolution equations by stochastic  center manifolds, which make some progress in the computation of stochastic  center manifolds~\cite{Rob1}.
For this pathwise approximation  version, the difficulty is to find a  random variable to approximate the center manifolds.
This is different from the random norm~\cite{Box} when using Carr's result~\cite{Carr}.
Section~\ref{Sec7} uses  some examples to illustrate our results.

\section{Exponential trichotomy} \label{Sec2}

We  consider the stochastic
evolution equation~\eqref{eq(2.1)}.
We assume that the linear operator $A: D(A)\to H$ generates a
strongly continuous semigroup $S(t):=\exp({At})$ on a Hilbert space~$(H, |\cdot|)$, which satisfies the
\emph{exponential trichotomy } with exponents $\alpha >\gamma> 0>-\gamma>-\beta$ and bound~$K$.

First we introduce the exponential trichotomy,  which generalizes the exponential dichotomy~\cite{Cop, Duan1, Duan2, Sac}.
\begin{definition} \label{trich}
A strongly continuous semigroup $S(t):=\exp({At})$ on~$H$ is said to  satisfy  the exponential trichotomy  with exponents $\alpha >\gamma> 0>-\gamma>-\beta$\,, if there exist continuous projection operators~$P^c$, $P^u$ and~$P^s$ on~$H$ such
that the following four conditions hold.
\begin{enumerate}
\item $\operatorname{id}=P^c+P^s+P^u$,  where $P^iP^j=0$ for $ i\neq j$, $i,j\in\{c,s,u\}$, and  $\operatorname{id}$~is the identity operator.
\item   $P^c \exp (At)=\exp(At)P^c$, $P^u\exp(At)=\exp(At)P^u$, $P^s\exp(At)=\exp(At)P^s$ for $t\geq 0$\,.
\item  Denote the reducing subspaces $H^c:= P^cH$\,, $H^u:=P^uH$ and  $H^s:=P^sH$\,.
We  call~$H^c$,  $H^u$ and~$H^s$ the center subspace, the unstable
subspace  and the stable subspace, respectively.
The restriction~$\exp(At)|_{H^i}$\,,  $t\geq 0$ are
isomorphism from~$H^i$ onto~$H^i$,  and we
define~$\exp(At)|_{H^i}$ for $t <0$ as the inverse map of~$\exp(-At)|_{H^i}$, $i\in\{c,s,u\}$.
\item
\begin{eqnarray}
&&|\exp(At)P^c v|\leq K \exp({\gamma |t|})|P^cv|,  \quad t \in  \mathbb{R},   v\in H,   \label{trichotomy1} \\
&&|\exp(At)P^u v|\leq K \exp({\alpha t}) |P^uv|,\quad t\leq 0\,,  v\in H,   \label{trichotomy2}\\
&&|\exp(At) P^sv|\leq K \exp({-\beta t}) |P^sv|,  \quad t\geq 0\,,   v\in H. \label{trichotomy3}
\end{eqnarray}
\end{enumerate}
\end{definition}

\begin{remark}If $H$~is a  finite dimensional space  and there exist     eigenvalues with real part   zero, less than zero,  and greater than zero,  then we have the
exponential trichotomy.
If $H$~is an infinite dimensional space and  the spectrum of~$A$ satisfies $\sigma(A)=\sigma^s\cup\sigma^c\cup\sigma^u$, where $\sigma^s=\{\lambda\in \sigma(A)\mid \operatorname{Re} \lambda \leq-\beta\}$, $\sigma^c=\{\lambda\in \sigma(A)\mid| \operatorname{Re} \lambda| \leq \gamma\}$, $\sigma^u=\{\lambda\in \sigma(A)\mid \operatorname{Re} \lambda \geq \alpha\}$, 
and $A$~generates a strong continuous semigroup,  then we  have the exponential trichotomy~\cite{Gal}.
\end{remark}

 We assume the nonlinear term~$F$ satisfies $F(0)=0$\,,  and assume it to be Lipschitz
continuous on~$H$, that is,
\[
|F(u_1)-F(u_2)|\le \Lip  F\,|u_1-u_2 |
\]
with the sufficiently small  Lipschitz constant $\Lip  F>0$\,.
If  the  nonlinear term~$F$ is locally Lipschitz, let $F^{(R)}(u)=\chi_R(u)F(u)$, where $\chi_R(u)$~is a cut-off function, and then  $F^{(R)}$~is  globally Lipschitz with Lipschitz  constant~$R\Lip  F$.

Second  we use a coordinate transform   to
convert  the stochastic partial differential equation~\eqref{eq(2.1)}
into a random evolution equation~\cite{Car3, Duan1,Duan2}.
Consider the   linear stochastic
differential equation,
\begin{equation}\label{eq(2.4)}
dz+\mu z\,dt=dW,
\end{equation}
where $\mu$ is a positive parameter.
A solution of this equation is called an Ornstein--Uhlenbeck
process.
Let $C_0(\mathbb{R},\mathbb{R})$ be continuous functions on~$\mathbb{R}$, the associated distribution~$\mathbb{P}$ is a Wiener measure defined on the Borel-$\sigma$-algebra $\mathcal{B}(C_0(\mathbb{R},\mathbb{R}))$.
Define $\{\theta_t\}_{t\in\mathbb{R}}$ to be the metric dynamical system generated by the Wiener process~$W(t)$.
A random dynamical system on a metric space~$H$ on $(\Omega,\mathcal{B}, \mathbb{P}, \theta_t)$ is a measurable map
\begin{eqnarray*}\phi:\mathbb{R}\times H  \times  \Omega&\to& H\\
(t,u, \omega)& \mapsto &\phi(t,u, \omega)\end{eqnarray*}
such that
\begin{enumerate}
\item   $\phi(0,u,\omega)=\operatorname{id}$;
\item   $\phi(t+s,u,\omega)=\phi(t,u,\theta_s\omega)\phi(s,u,\omega)$,   for all $t,s \in \mathbb{R}$ for almost all $\omega \in \Omega$.
\end{enumerate}
 Caraballo et~al.~\cite{Car3} and Duan et al.~\cite{ Duan1} established the following lemma, which is used in the proof of the existence of  stochastic  invariant manifolds.
\begin{lemma}\label{lem(2.1)}
\begin{enumerate}
\item   There exists a $\{\theta_t\}_{t\in\mathbb{R}}$-invariant set
$\Omega\in\mathcal{B}(C_0(\mathbb{R},\mathbb{R}))$ of full measure
with sublinear growth:
\[
\lim_{t\to\pm\infty}\frac{|\omega(t)|}{|t|}=0\,,\quad
\omega\in\Omega\,.
\]
\item   For $\omega\in\Omega$ the random variable
$z(\omega)=-\mu\int_{-\infty}^0\exp(\mu\tau)\omega(\tau)\,d\tau$
exists and generates a unique stationary solution of the Ornstein--Uhlenbeck \sde~(\ref{eq(2.4)})
given by the convolutions
\begin{eqnarray*}
\Omega\times \mathbb{R}\ni(\omega,t)\to z(\theta_t\omega)
&=&-\mu\int_{-\infty}^0\exp(\mu\tau)\theta_t\omega(\tau)\,d\tau\\
&=&-\mu\int_{-\infty}^0\exp(\mu\tau)\omega(\tau+t)\,d\tau+\mu\omega(t).
\end{eqnarray*}
The mapping $t\mapsto z(\theta_t\omega)$
is continuous.
\item    In particular,
\begin{equation*}
\lim_{t\to\pm\infty}\frac{|z(\theta_t\omega)|}{|t|}=0\quad
   \text{for }\omega\in\Omega\,.
\end{equation*}
\item    In addition,
\begin{equation*}
\lim_{t\to\pm\infty}\frac{1}{t}\int_0^tz(\theta_\tau\omega)\,d\tau=0\quad
  \text{for }\omega\in\Omega\,.
 \end{equation*}
 \end{enumerate}
\end{lemma}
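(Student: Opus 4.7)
My plan is to address the four parts of Lemma~\ref{lem(2.1)} in order, since each one builds on the previous, and to keep the argument pathwise rather than in distribution.

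For (i), I would start from the strong law of large numbers for Brownian motion, which gives $\omega(t)/t\to 0$ almost surely as $t\to +\infty$; the analogous statement at $-\infty$ follows from the time-reversal symmetry of Brownian motion (the process $\tilde W(t)=tW(1/t)$ is again a Brownian motion). The intersection of the two events is a Borel set $\Omega\subset C_0(\mathbb{R},\mathbb{R})$ of full Wiener measure on which sublinear growth holds at both ends. To secure $\theta$-invariance, observe the identity $(\theta_s\omega)(t)=\omega(t+s)-\omega(s)$, so $(\theta_s\omega)(t)/|t|$ differs from $\omega(t+s)/|t+s|\cdot|t+s|/|t|$ by $-\omega(s)/|t|$; since $|t+s|/|t|\to 1$, sublinear growth is preserved and $\Omega$ is closed under every shift.

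For (ii), the sublinear growth established in (i) yields a pointwise bound $|\omega(\tau)|\le C_\omega(1+|\tau|)$ on $(-\infty,0]$, and multiplying by $e^{\mu\tau}$ gives an integrable dominating function, so $z(\omega)$ is well defined. The alternative convolution representation for $z(\theta_t\omega)$ follows from substituting the shift identity and using $\int_{-\infty}^0 e^{\mu\tau}\,d\tau=1/\mu$. To verify that $t\mapsto z(\theta_t\omega)$ is a stationary solution of \eqref{eq(2.4)}, I would differentiate the convolution representation formally in $t$ to obtain $dz(\theta_t\omega)=-\mu z(\theta_t\omega)\,dt+dW(t)$, equivalently rewriting $z(\theta_t\omega)=\int_{-\infty}^t e^{-\mu(t-s)}\,dW(s)$, which is the unique bounded (stationary) solution of the scalar linear Ornstein--Uhlenbeck equation. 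Continuity of $t\mapsto z(\theta_t\omega)$ follows by dominated convergence applied to the convolution, using continuity of the sample paths.

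For (iii), I would use the second representation to bound
\[
\frac{|z(\theta_t\omega)|}{|t|}\le \mu\frac{|\omega(t)|}{|t|}+\mu\int_{-\infty}^0 e^{\mu\tau}\,\frac{|\omega(\tau+t)|}{|t|}\,d\tau.
\]
The first term vanishes by (i), and for the second, the integrand tends to zero pointwise in $\tau$ and is dominated by $C_\omega e^{\mu\tau}(1+|\tau+t|)/|t|$, which itself has an integrable uniform dominator for $|t|$ large, so dominated convergence closes the argument.

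For (iv), a short route avoids invoking the Birkhoff ergodic theorem. Integrating the Ornstein--Uhlenbeck equation from $0$ to $t$ along the sample path gives
\[
z(\theta_t\omega)-z(\omega)=-\mu\int_0^t z(\theta_\tau\omega)\,d\tau+\omega(t),
\]
so that $\frac{1}{t}\int_0^t z(\theta_\tau\omega)\,d\tau=\frac{1}{\mu t}\bigl[\omega(t)+z(\omega)-z(\theta_t\omega)\bigr]$, and the result follows at once from (i) and (iii). I expect the main obstacle to be part (i), specifically producing a single Borel set that is simultaneously of full measure, supports sublinear growth at both $\pm\infty$, and is closed under \emph{all} real shifts $\theta_s$; once that pathwise regularity is in hand, (ii)--(iv) reduce to routine dominated-convergence estimates.
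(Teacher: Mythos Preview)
The paper does not supply its own proof of this lemma; immediately before the statement it attributes the result to Caraballo, Kloeden and Schmalfu{\ss}~\cite{Car3} and Duan, Lu and Schmalfu{\ss}~\cite{Duan1}. Your self-contained argument is correct. Two small remarks. In~(i), the inversion $\tilde W(t)=tW(1/t)$ you cite is the one-sided scaling trick near the origin; for the two-sided process it is cleaner simply to note that $\{\omega(-t)\}_{t\ge 0}$ is again a Brownian motion and apply the strong law a second time. Your direct verification that the sublinear-growth set is closed under every shift~$\theta_s$ is precisely the point that needs checking and is often glossed over. In~(iv), your pathwise integration of the Ornstein--Uhlenbeck equation to reach
\[
\frac{1}{t}\int_0^t z(\theta_\tau\omega)\,d\tau=\frac{1}{\mu t}\bigl[\omega(t)+z(\omega)-z(\theta_t\omega)\bigr]
\]
is an elegant shortcut that avoids Birkhoff's ergodic theorem (the standard route in the cited literature); it works here because the noise is additive, so the SDE integrates pathwise on~$\Omega$ and the identity holds for every $\omega\in\Omega$, not merely almost surely.
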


We now replace the Borel-$\sigma$-algebra  $\mathcal{B}(C_0(\mathbb{R},\mathbb{R}))$ by
\[
\mathcal{F}=\{\Omega\cap D \mid D\in
\mathcal{B}(C_0(\mathbb{R},\mathbb{R}))\}
\]
for $\Omega$ given in Lemma~\ref{lem(2.1)}.
The probability
measure is the restriction of the Wiener measure to this new
$\sigma$-algebra, which is  also denoted by~$\mathbb{P}$.
In the
following we consider a random dynamical system over  the metric dynamical system~$
(\Omega,\mathcal{F},\mathbb{P},\theta)$.

We show that  the solution of equation~(\ref{eq(2.1)}) define  a random dynamical system.
For any $u^*\in H$ and $\omega\in \Omega$\,,
we introduce the  coordinate transform
\begin{equation}\label{eq(2.7)}
u=T(u^*, \omega)=u^*\exp[{z(\theta_t\omega)}]
\end{equation}
and its inverse transform
\begin{equation}\label{eq(2.8)}
T^{-1}(u, \omega)=u\exp[{-z(\theta_t\omega)}].
\end{equation}

Without loss of generality, we drop the $*$.
We transform  the stochastic evolution equation~\eqref{eq(2.1)}  into the following partial differential equation with random coefficients
\begin{equation}\label{eq(2.5)}
\frac{du}{dt}=Au+z(\theta_t\omega)u+G(\theta_t\omega,u),\quad
u(0)=u_0\in H,
\end{equation}
where $G(\omega,u)=\exp[{-z(\omega)}]F(\exp[{z(\omega)}]u)$.
We have
\[
|G(\omega,u_1)-G(\omega,u_2)|\le \Lip_uG\,  |u_1-u_2 |,
\]
where $\Lip_u G$ denotes the Lipschitz constant of~$G(\cdot, u)$ with
respect to~$u$.
For  any $\omega\in\Omega$ the function~$G$ has the
same global Lipschitz constant  as~$F$ by the construction of~$G$. 
Define \begin{equation*}\Psi_A(t,s)=\exp\left[A(t-s)+\int_s^tz(\theta_r\omega)\,dr\right]\end{equation*} as a `state transition operator' for the linear random partial differential equation~\eqref{eq(2.5)} in the case
when the nonlinearity is neglected,  $G(\omega,u)=0$\,.
The solution is interpreted in a mild sense
\begin{equation*}\label{eq(2.6)}
u(t)=\Psi_A(t,0)u_0 +\int_0^t
\Psi_A(t,s)G(\theta_s\omega,u(s))\,ds\,.
\end{equation*}
This equation has  a unique measurable solution from the  Lipschitz continuity of~$G$ and measurable property of~$z(\omega)$.
Hence the solution mapping
$(t,v,\omega)\mapsto u(t, v, \omega)$
generates a random dynamical system. 

Duan et al.~\cite{Duan2} proved that  the  solution of~\eqref{eq(2.5)} is the coordinate transformation the  solution of~\eqref{eq(2.1)}.

\begin{lemma}\label{lem(2.2)}
Suppose that $u$~is the random dynamical system generated by~(\ref{eq(2.5)}).
Then
\begin{equation}\label{eq(2.9)}
(t, v,\omega)\mapsto T^{-1}(\theta_t\omega,
u(t,T(\omega,v),\omega))=:\hat u(t, v,\omega)
\end{equation}
is a random dynamical system.
For any $v\in H$ this process
$(t,\omega)\mapsto  \hat u(t, v,\omega)$ is a solution to~(\ref{eq(2.1)}).
\end{lemma}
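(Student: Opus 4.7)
The plan is to verify separately the two required assertions: the cocycle (random dynamical system) property of $\hat u$, and the fact that $\hat u(\cdot, v, \omega)$ solves~\eqref{eq(2.1)}. Throughout I would work pathwise on the $\theta_t$-invariant set $\Omega$ furnished by Lemma~\ref{lem(2.1)}, so that $t\mapsto z(\theta_t\omega)$ is continuous with sublinear growth and all the expressions below are well defined.

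For the cocycle property, observe that $T(\omega,\cdot)$ is simply multiplication by $\exp[z(\omega)]$, with inverse $T^{-1}(\omega,\cdot)$ given by multiplication by $\exp[-z(\omega)]$. The initial condition $\hat u(0,v,\omega)=T^{-1}(\omega,u(0,T(\omega,v),\omega))=T^{-1}(\omega,T(\omega,v))=v$ is then immediate. For the composition law, set $w=\hat u(s,v,\omega)$, which by the definition means $u(s,T(\omega,v),\omega)=T(\theta_s\omega,w)$, and apply the cocycle property of the random PDE~\eqref{eq(2.5)}:
\begin{align*}
\hat u(t+s,v,\omega) &= T^{-1}\bigl(\theta_{t+s}\omega,\,u(t+s,T(\omega,v),\omega)\bigr) \\
&= T^{-1}\bigl(\theta_{t+s}\omega,\,u(t,u(s,T(\omega,v),\omega),\theta_s\omega)\bigr) \\
&= T^{-1}\bigl(\theta_t(\theta_s\omega),\,u(t,T(\theta_s\omega,w),\theta_s\omega)\bigr) \\
&= \hat u(t,\hat u(s,v,\omega),\theta_s\omega).
\end{align*}
Joint measurability and continuity in~$t$ transfer directly from the corresponding properties of $u$ and of $t\mapsto z(\theta_t\omega)$ supplied by Lemma~\ref{lem(2.1)}.

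To verify that $\hat u$ satisfies~\eqref{eq(2.1)}, I would differentiate the relation $u(t)=\hat u(t)\exp[z(\theta_t\omega)]$ via the Stratonovich chain rule, invoking the OU equation $dz=-\mu z\,dt+dW$. Substituting the random PDE~\eqref{eq(2.5)} for $du$ on one side and expanding $G(\omega,u)=\exp[-z(\omega)]F(\exp[z(\omega)]u)$ on the other, the extra drift produced by the coordinate change cancels against the drift of~$z$, while the Stratonovich contribution from $\hat u\circ dz$ recombines into $\hat u\circ dW$, leaving $d\hat u=A\hat u\,dt+F(\hat u)\,dt+\hat u\circ dW$, which is precisely~\eqref{eq(2.1)}.

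The step I expect to demand the most care is the last one, because for linear multiplicative noise in infinite dimensions the Stratonovich chain rule must be justified in a function-space setting where $u(t)$ is only a mild solution. A safer route that avoids differentiating is to work throughout at the level of mild solutions: write the variation-of-constants formula for $u$ based on the state-transition operator $\Psi_A(t,s)$, multiply through by $\exp[-z(\theta_t\omega)]$, and verify that the resulting integral equation is exactly the mild Stratonovich form of~\eqref{eq(2.1)}; uniqueness of mild solutions then yields the conclusion. The cocycle portion, by contrast, is essentially bookkeeping once one carefully tracks how $T$ intertwines with the shift~$\theta$.
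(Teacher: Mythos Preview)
The paper does not actually give a proof of this lemma: it simply attributes the result to Duan, Lu and Schmalfu{\ss}~\cite{Duan2} and states the lemma without argument. Your proposal therefore supplies strictly more than the paper does, and what you wrote is the standard argument found in that reference.

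Your verification of the cocycle identity is correct and is exactly the intended one: conjugation of a cocycle by the family of homeomorphisms $T(\omega,\cdot)$ always yields a cocycle, and your chain of equalities tracks this faithfully. For the second assertion, the formal Stratonovich computation you sketch is also the right mechanism; one small point to keep straight is that the drift term in~\eqref{eq(2.5)} is really $\mu z(\theta_t\omega)u$ (the paper has silently set $\mu=1$, cf.\ Section~\ref{sec:sdwe}), so when you cancel drifts be sure to use $dz=-\mu z\,dt+dW$ with the matching $\mu$. Your caveat about justifying the chain rule for mild solutions in infinite dimensions is well taken, and the alternative you propose---multiplying the mild formulation for $u$ by $\exp[-z(\theta_t\omega)]$ and identifying the result with the mild Stratonovich form of~\eqref{eq(2.1)}---is precisely how the cited paper handles this rigorously.
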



\section{Stochastic center manifolds} \label{Sec3}

We introduce the definition of stochastic center manifolds.
A basic tool of proving the existence of  stochastic center manifolds is to define an appropriate function space, which is a Banach space.

\begin{definition}
A random set~$M(\omega)$ is called an (forward) invariant set for a random
dynamical system~$\phi(t, \omega, x)$ if
$\phi(t,M(\omega),\omega)\subset M(\theta_t\omega)$ for
$t\geq 0$\,.
If we can represent~$M(\omega)$ by a graph of a  (Lipschitz)
mapping from the center subspace to its complement,
$h^c(\cdot,\omega): H^c\to H^u\oplus H^s$,
such that
$M(\omega)=\{v + h^c(v,\omega) \mid v\in H^c\}$,
$h^c(0,\omega)=0$\,, and the tangency condition that  the derivative $Dh^c(0,\omega)=0$\,, $h^c(v,\cdot)$ is measurable for every $v\in H^c$,
then $M(\omega)$~is called a (Lipschitz) center
manifold, often denoted as~$M^c(\omega)$.  $M(\omega)$~is called a local center manifold if it is a graph of a (Lipschitz) mapping~$\chi_R(v)h^c(\cdot,\omega)$.
\end{definition}

We first show the existence of a Lipschitz
center manifold for the random partial differential
equation~\eqref{eq(2.5)}.
Then, we apply the inverse transformation~$T^{-1}$ to get a center
manifold for the stochastic evolution equation~(\ref{eq(2.1)}).

For each $\eta>0$\,,  we denote the Banach
space
\begin{align*}
C_\eta={}&\left \{ \phi\in C(\mathbb{R}, H) \mid  \sup_{t\in \mathbb{R}}\exp\left[{-\eta |t| - \int_0^t
z(\theta_r\omega) \,dr}\right] | \phi(t)|
 < \infty\right\}
\end{align*}
with the norm
\[\left|\phi\right|_{C_\eta}=\sup_{t\in \mathbb{R}}\exp\left[{-\eta |t|-\int_0^t z(\theta_r\omega)
\,dr}\right] |\phi(t)|.
\]
The set~$C_\eta$ is the set of  `slowly varying' functions.
We know that the functions  are controlled by
 $\exp\left[{\eta |t|+\int_0^t z(\theta_r\omega)
\,dr}\right]$.
Let
\[M^c(\omega)=\left\{u_0\in H \mid u(\cdot, u_0, \omega) \in C_\eta\right\},
\]
where  $u(t, u_0, \omega)$~is  the solution of~(\ref{eq(2.5)}) with the
initial data $u (0)= u_0$\,.

  We
prove that $M^c(\omega)$~is invariant and is  the
graph of a Lipschitz function.

For simple the expression of the proof, define the operator $B: \mathbb{R} \to L(H)$ by
\[ B(t):= \begin{cases}-\exp(At) P^u, & t \leq 0\,; \\   \exp(At) P^s, &  t \geq 0\,. \end{cases}\]
Different from the proof process of Duan et al.~\cite{Duan1,Duan2}, we need analysis the behavior of the  solution on center subspace.
\begin{theorem} \label{Thm(3.1)}    If   $\gamma < \eta<\min\{\beta, \alpha\}$ such that the nonlinearity term is sufficiently small,
\begin{eqnarray}K    \Lip_u G   \left(\frac{1}{\eta-\gamma}+\frac{1}{\beta-\eta}+\frac{1}{\alpha-\eta}\right)<
1\,,\label{pr1}\end{eqnarray} then there exists a  center manifold for
 the random partial differential  equation~(\ref{eq(2.5)}),  which is written as the graph
\[
M^c(\omega) = \{v+h^c(v,\omega)\mid v \in H^c\},
\]
where $h^c(\cdot,  \omega) : H^c\to  H^u\oplus H^s$ is a Lipschitz continuous mapping from the center subspace and
satisfies $h^c(0,\omega)=0$\,. 
\end{theorem}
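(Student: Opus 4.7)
The plan is to apply the Lyapunov--Perron method directly: construct the center manifold by producing, for each $\xi \in H^c$ and $\omega \in \Omega$, a unique solution in the weighted space $C_\eta$ whose projection onto $H^c$ at time zero equals $\xi$, and then reading off $h^c(\xi,\omega)$ from the stable and unstable components at $t=0$.

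First, I would write the candidate solution via a variation-of-constants representation that uses the exponential trichotomy to integrate each spectral component from the appropriate endpoint. Concretely, define the operator
\begin{equation*}
\mathcal{J}_\xi(u)(t) = \Psi_A(t,0)P^c\xi + \int_0^t \Psi_A(t,s)P^c G(\theta_s\omega,u(s))\,ds + \int_{-\infty}^{+\infty} B(t-s)\exp\!\left[\int_s^t z(\theta_r\omega)\,dr\right] G(\theta_s\omega,u(s))\,ds,
\end{equation*}
where the center piece is integrated from $0$, the stable piece (via the $s\ge 0$ branch of $B$) from $-\infty$, and the unstable piece (via the $s\le 0$ branch) from $+\infty$. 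One checks formally that any $C_\eta$-solution of \eqref{eq(2.5)} with prescribed center initial data $\xi$ must satisfy $u = \mathcal{J}_\xi(u)$, and conversely that a fixed point solves \eqref{eq(2.5)}.

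Next I would verify that $\mathcal{J}_\xi$ maps $C_\eta$ into itself and is a contraction. The crucial algebraic observation is that the random weight $\exp[\int_s^t z(\theta_r\omega)\,dr]$ carried by $\Psi_A(t,s)$ combines with the weight $\exp[-\int_0^t z(\theta_r\omega)\,dr]$ from the $C_\eta$ norm to produce $\exp[-\int_0^s z(\theta_r\omega)\,dr]$, which is exactly the weight used to bound $|u(s)|$ inside the Lipschitz estimate for $G$. After this cancellation, the three trichotomy bounds \eqref{trichotomy1}--\eqref{trichotomy3} combined with $\gamma<\eta<\min\{\alpha,\beta\}$ yield convergent integrals, with the total Lipschitz constant of $\mathcal{J}_\xi - \mathcal{J}_{\xi}|_{G=0}$ equal to $K\Lip_u G\,(1/(\eta-\gamma)+1/(\beta-\eta)+1/(\alpha-\eta))$. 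By hypothesis \eqref{pr1} this is less than one, so Banach's fixed point theorem gives a unique $u^*(\cdot,\xi,\omega)\in C_\eta$, depending Lipschitz continuously on $\xi$ (again via the contraction principle applied to two different initial data).

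Finally, I would set $h^c(\xi,\omega) := (P^s+P^u)u^*(0,\xi,\omega)$, so that $M^c(\omega)$ in the statement equals $\{\xi + h^c(\xi,\omega) : \xi\in H^c\}$. Lipschitz continuity of $h^c(\cdot,\omega)$ and $h^c(0,\omega)=0$ follow because $u\equiv 0$ is the unique fixed point when $\xi=0$ (using $G(\omega,0)=0$); measurability in $\omega$ follows from measurability of the iterates of $\mathcal{J}_\xi$ together with the measurable dependence of $z(\theta_\cdot\omega)$ established in Lemma~\ref{lem(2.1)}. Invariance under the random dynamical system is then routine: if $u_0 = \xi + h^c(\xi,\omega) \in M^c(\omega)$, the shifted trajectory $s\mapsto u^*(t+s,\xi,\omega)$ lies in the analogous weighted space for $\theta_t\omega$, so its time-zero value belongs to $M^c(\theta_t\omega)$. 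I expect the main technical obstacle to be the bookkeeping of the random weight $\exp[\int_0^t z(\theta_r\omega)\,dr]$ in the $C_\eta$ norm---getting the cancellations right in the three integrals is where one most easily loses control of the estimates, though the structure of $\Psi_A$ is specifically designed so that the deterministic trichotomy estimates carry through.
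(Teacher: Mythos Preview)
Your proposal is correct and follows essentially the same Lyapunov--Perron argument as the paper: the operator $\mathcal{J}_\xi$ coincides with the paper's $J^c(\cdot,v)$, the contraction estimate with constant $K\Lip_uG\,(1/(\eta-\gamma)+1/(\beta-\eta)+1/(\alpha-\eta))$ is identical, and the definition of $h^c$, the measurability via iterates, and the invariance via time-shifting all match the paper's treatment. The only step you leave implicit that the paper spells out is the ``if and only if'' characterization showing that any $C_\eta$-solution necessarily satisfies the integral equation (by letting the stable and unstable boundary terms tend to zero as $\tau\to\mp\infty$), but you correctly flag this as a formal check.
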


\begin{proof}
 First we claim that $u_0 \in M^c(\omega)$ if and only if  there exists a slowly varying function $u(\cdot, u_0, \omega)\in
C_\eta$  with
\begin{align}\label{eq(3.2)}\begin{split}
u(t, u_0,\omega)={}&\Psi_A(t,0)v+\int_0^t
\Psi_A(t,s)P^cG(\theta_s\omega,u(s))\,ds
\\&{} +\int^{+\infty}_{-\infty}\Psi_0(t,s) B(t-s)G(\theta_s\omega,u(s))\,ds\,,
\end{split}
\end{align}
where $v = P^c u_0$, $\Psi_0(t,s)=\exp\left[\int_s^tz(\theta_r\omega)\,dr\right]$.  

To prove this claim,  first we let $u_0 \in M^c(\omega)$.
By
using the variation of constants formula, the solution on each subspace denoted as
\begin{align}&\label{eq(3.3)}
P^cu(t, u_0, \omega) = \Psi_A(t,0)
v + \int_0^t\Psi_A(t,s)
P^cG(\theta_s\omega,u)\,ds\,.
\\&
\label{eq(3.4)}
P^u u(t,u_0, \omega) =\Psi_A(t,\tau)P^uu(\tau, u_0, \omega)+\int_{\tau}^t \Psi_A(t,s)
P^uG(\theta_s\omega,u)\,ds\,.
\\&\label{eq(3.44)}
P^s u(t, u_0, \omega) =\Psi_A(t,\tau)P^s u(\tau, u_0, \omega)+\int_{\tau}^t \Psi_A(t,s)
P^s G(\theta_s\omega,u)\,ds\,.
\end{align}
Since the slowly varying function $u \in C_\eta$\,, we have for $t < \tau $ that the magnitude
\begin{align*}
\left|\Psi_A(t,\tau)P^uu(\tau, u_0, \omega)\right|
&\leq K\exp[{\alpha(t-\tau)} ]\Psi_0(t,0)\exp ({\eta
\tau} )|u|_{C_\eta} \\
& = K\Psi_\alpha(t,0)
\exp[{-(\alpha-\eta)\tau}] |u|_{C_\eta}
\\& \to 0 \quad\text{as }\tau \to +\infty.
\end{align*}
For  $t > \tau $\,,
\begin{align*}
\left|\Psi_A(t,\tau)P^su(\tau, u_0,
\omega)\right|
&\leq K\exp[{-\beta(t-\tau)}]\Psi_0(t,0)\exp({\eta
\tau}) |u|_{C_\eta} \\
& =K\Psi_{-\beta}(t,0)
\exp[{(\beta+\eta)\tau}] |u|_{C_\eta}
\\&\to 0 \quad\text{as }\tau \to -\infty.
\end{align*}
Then, taking the two separate limits $\tau \to \pm  \infty$   in~(\ref{eq(3.4)}) and~\eqref{eq(3.44)} respectively,
\begin{align}&\label{eq(3.5)}
P^uu(t, u_0, \omega)=\int^t_\infty
\Psi_A(t,s)
P^uG(\theta_s\omega,u(s))\,ds\,,
\\&\label{eq(3.55)}
P^su(t, u_0, \omega)=\int^t_{-\infty}
\Psi_A(t,s)
P^sG(\theta_s\omega,u(s))\,ds\,.
\end{align}
Combining~(\ref{eq(3.3)}),  (\ref{eq(3.5)}) and~(\ref{eq(3.55)}), we have~(\ref{eq(3.2)}).
The converse  follows from a direct computation.

Next we prove  that for any given $v \in H^c$, the centre subspace, the integral
equation~(\ref{eq(3.2)}) has a unique solution in the slowly varying functions space~$C_\eta$.
Let
\begin{align}\label{eq(3.22)}\begin{split}
J^c(u,v):={}&\Psi_A(t,0)v+\int_0^t
\Psi_A(t,s)
P^cG(\theta_s\omega,u(s))\,ds\\&{}
+\int^{+\infty}_{-\infty}\Psi_0(t,s)B(t-s)G(\theta_s\omega,u(s))\,ds\,.
\end{split}
\end{align}
$ J^c$~is
well-defined from $C_\eta \times H^c$ to the slowly varying functions space~$C_\eta$.
For each pair of slowly varying functions
$u, \bar u \in C_\eta$\,, we have  that for $\gamma < \eta<\min\{\beta, \alpha\}$,
\begin{align}\label{eq(3.6)}
\begin{split}
&| J^c(u,v ) -  J^c (\bar u, v ) |_{C_\eta} \\
&\leq\sup_{t\in \mathbb{R}} \Bigg\{\exp\left[
{-\eta |t| - \int_0^t
z(\theta_s\omega) \,ds}\right] \bigg |\int_0^t
\Psi_A(t,s)
P^c(G(\theta_s\omega,u)-G(\theta_s\omega,\bar u))\,ds
\\&
\quad \quad{}
+\int^{+\infty}_{-\infty}\Psi_0(t,s)B(t-s)(G(\theta_s\omega,u)-G(\theta_s\omega,\bar u))\,ds\bigg|
\Bigg\}\\
&\leq\sup_{t\in \mathbb{R}} \Bigg\{K \Lip_u G |u-\bar
u|_{C_\eta}\bigg|\int_0^{t} \exp[{(\gamma-\eta)|t-s|}]\,ds
\\&  \quad \quad{}
+\int^t_{-\infty}\exp[{(\eta-\beta)(t-s)}]\,ds
+\int_t^{+\infty}\exp[{(\alpha-\eta)(t-s)}]\,ds\bigg |
\Bigg\}\\
&\leq K \Lip_u
G\left(\frac{1}{\eta-\gamma}+\frac{1}{\beta-\eta}+\frac{1}{\alpha-\eta}\right) |u-\bar
u|_{C_\eta}.
\end{split}
\end{align}

From equation~\eqref{eq(3.6)}, $ J^c$~is Lipschitz continuous in~$v$.
By the theorem's precondition~\eqref{pr1},
$ J^c$~is a uniform contraction with respect to the parameter~$v$.
By the uniform contraction mapping principle,
for each $v \in H^c$,  the mapping $J^c (\cdot , v )$ has a
unique fixed point $u(\cdot,v, \omega) \in C_\eta$\,.
Combining equation~\eqref{eq(3.22)} and equation~\eqref{eq(3.6)},
\begin{align}\label{eq(3.7)}
|u(\cdot ,  v,   \omega) - u (\cdot,  \bar v, \omega
)|_{C_\eta}\leq \frac{K}{1-K \Lip_u
G\left(\frac{1}{\eta-\gamma}+\frac{1}{\beta-\eta}+\frac{1}{\alpha-\eta}\right)}|v - \bar v |,
\end{align}
for each
 fixed point~$u(\cdot,v, \omega)$.
 Then for each time $t\geq 0$\,,
$u(t,   \cdot, \omega )$~is Lipschitz from the center subspace~$H^c$ to slowly varying  functions~$C_\eta$.
 $u(\cdot,  v, \omega  )\in C_\eta$ is a unique solution
of the integral equation~(\ref{eq(3.2)}).
Since $u(\cdot , v,   \omega)$ can be an $\omega$-wise limit of the
iteration of contraction mapping~$J^c$ starting at~$0$ and $J^c$
maps a $\mathcal{F}$-measurable function to a $\mathcal{F}$-measurable function,
$u(\cdot, v,   \omega)$~is $\mathcal{F}$-measurable.
Combining $u(\cdot, v, \omega)$~is  continuous with respect to~$H$, we have $u(\cdot , v,   \omega)$ is measurable with respect to~$(\cdot, v, \omega)$.

Let $h^c (v,\omega) := P^s u (0, v, \omega)\oplus P^u u (0, v, \omega)$.
Then
\[
h^c(v, \omega) =\int^{+\infty}_{-\infty} \Psi_0(0,s)
B(-s)G(\theta_s\omega,u(s,v,\omega))\,ds\,.
\]
We see that~$h^c$ is $\mathcal{F}$-measurable and  $h^c(0,\omega)=0$\,.
 From the definition of~$h^c(v,
\omega)$ and the claim that $ u_0 \in M^c(\omega)$  if and only if
there exists $u(\cdot, u_0, \omega)~\in~C_\eta$ with $u(0)=u_0$ and satisfies~(\ref{eq(3.2)}) it follows that $ u_0 \in M^c(\omega)$ if and only
if there exists $v \in H^c$ such that $ u_0=v+h^c(v,
\omega)$, therefore,
\begin{equation*}
M^c(\omega) = \{v+ h^c(v, \omega) \mid v \in H^c\}.
\end{equation*}

Next we prove
that for any $x\in H$\,, the function
\begin{equation}\label{eq(3.8)}
\omega\to\inf_{y\in H^c}\left |x-(y+ h^c(y, \omega))\right |
\end{equation}
is measurable.
Let $H'$~be a countable dense subset of the separable space~$H$.
From  the continuity of~$h^c(\cdot,
\omega)$,
\begin{equation}\label{eq(3.9)}
\inf_{y\in H^c}|x-(y+ h^c(y, \omega))|
=\inf_{y\in H'}|x-P^cy-h^c(P^cy, \omega)|.
\end{equation}
The measurability of~(\ref{eq(3.8)}) follows since
$\omega\to h^c(P^cy,\omega)$ is measurable for any $y\in H'$.

Finally, we show that $M^c(\omega)$~is invariant, that is for each
$u_0\in M^c(\omega)$, $u(s, u_0, \omega) \in M^c(\theta_s\omega)$
for all $ s\geq 0$\,.
Since for $s \geq 0$\,,
$u(t+s, u_0,\omega)$ is a solution of
\[
\frac{du}{dt}=Au+z(\theta_t(\theta_s\omega))u+G(\theta_t(\theta_s\omega),u),\quad
u(0)=u(s, u_0, \omega).
\]
Thus $u(t,u(s,u_0,\omega), \theta_s\omega)=u(t+s, u_0,\omega)$ and  $u(t,u(s,u_0,\omega),
\theta_s\omega) \in C_\eta$\,.
So we have  $u(s, u_0, \omega) \in
M^c(\theta_s\omega)$

\end{proof}

\begin{theorem} \label{Thm(3.2)} Suppose that $u$~is the solution of random partial differential equation~(\ref{eq(2.5)}), then
$\widetilde {M}^c(\omega)=T^{-1}(\omega,M^c(\omega))$ is a
center manifold of the stochastic partial differential equation~(\ref{eq(2.1)}).
\end{theorem}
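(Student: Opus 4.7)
The plan is to derive Theorem~\ref{Thm(3.2)} as a direct consequence of Theorem~\ref{Thm(3.1)} together with Lemma~\ref{lem(2.2)}, by exploiting the fact that the coordinate transform $T^{-1}(u,\omega)=u\exp[-z(\omega)]$ is, at each fixed $\omega$, a pathwise linear bijection on $H$ that commutes with the spectral projections $P^c,P^u,P^s$. Two things must be verified: forward invariance of $\widetilde{M}^c(\omega)$ under the random dynamical system generated by~\eqref{eq(2.1)}, and the representation of $\widetilde{M}^c(\omega)$ as a Lipschitz graph over $H^c$.

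First I would verify invariance. Take $\hat{u}_0\in\widetilde{M}^c(\omega)$, so $\hat{u}_0=T^{-1}(u_0,\omega)$ for some $u_0\in M^c(\omega)$. By Lemma~\ref{lem(2.2)}, the solution of~\eqref{eq(2.1)} with this initial datum is
\[
\hat{u}(t,\hat{u}_0,\omega)=T^{-1}\big(\theta_t\omega,\,u(t,u_0,\omega)\big),
\]
where $u$ is the solution of the random partial differential equation~\eqref{eq(2.5)}. By the invariance statement in Theorem~\ref{Thm(3.1)}, $u(t,u_0,\omega)\in M^c(\theta_t\omega)$ for all $t\ge 0$, and hence
\[
\hat{u}(t,\hat{u}_0,\omega)\in T^{-1}\big(\theta_t\omega,M^c(\theta_t\omega)\big)=\widetilde{M}^c(\theta_t\omega),
\]
which is precisely forward invariance.

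Next I would exhibit the graph structure. Because $T^{-1}(\cdot,\omega)$ is multiplication by the scalar $\exp[-z(\omega)]$, it is linear and commutes with every $P^i$, so it maps $H^c$ into $H^c$ and $H^u\oplus H^s$ into $H^u\oplus H^s$. Writing $\tilde{v}=\exp[-z(\omega)]\,v$ for $v\in H^c$, one obtains
\[
\widetilde{M}^c(\omega)=\big\{\tilde{v}+\tilde{h}^c(\tilde{v},\omega)\mid \tilde{v}\in H^c\big\},\qquad \tilde{h}^c(\tilde{v},\omega):=\exp[-z(\omega)]\,h^c\big(\exp[z(\omega)]\tilde{v},\omega\big).
\]
The base-point condition $\tilde{h}^c(0,\omega)=0$ is inherited from $h^c(0,\omega)=0$, and because the factors $\exp[\pm z(\omega)]$ cancel in the Lipschitz estimate, $\tilde{h}^c(\cdot,\omega)$ has the same Lipschitz constant as $h^c(\cdot,\omega)$. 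Measurability of $\tilde{h}^c$ in $\omega$ is inherited from that of $z(\omega)$ and of $h^c$ (established in Theorem~\ref{Thm(3.1)}).

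There is no genuinely hard step in this argument: the proof is essentially bookkeeping. The only subtlety is keeping track of how the $\omega$-argument of the transform shifts from $\omega$ to $\theta_t\omega$ as the random dynamical system evolves, which is exactly what Lemma~\ref{lem(2.2)} was designed to encode, so the transfer of invariance is automatic. The proof therefore reduces to recording the above identifications.
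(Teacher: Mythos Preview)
Your proposal is correct and follows essentially the same approach as the paper: first use Lemma~\ref{lem(2.2)} together with the invariance of $M^c(\omega)$ from Theorem~\ref{Thm(3.1)} to establish forward invariance of $\widetilde{M}^c(\omega)$, then exploit that $T^{-1}(\cdot,\omega)$ is scalar multiplication (hence linear and commuting with the projections) to rewrite $\widetilde{M}^c(\omega)$ as a Lipschitz graph over $H^c$. Your added remarks on the Lipschitz constant and measurability of $\tilde h^c$ are not in the paper's proof of this theorem but are correct and worth recording; note also that your formula $\tilde h^c(\tilde v,\omega)=e^{-z(\omega)}h^c(e^{z(\omega)}\tilde v,\omega)$ is consistent with the definition~\eqref{eq(2.8)} of $T^{-1}$, whereas the paper's displayed computation carries the opposite sign---this is a typographical inconsistency in the paper, not an error on your part.
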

\begin{proof}
Denote~$\tilde u(t, \omega, x)$ the solution of~(\ref{eq(2.1)}).
From Lemma~\ref{lem(2.2)},  for $t\geq 0$\,,
\begin{align*}
\tilde u(t,\omega,&\widetilde  {M}^c(\omega))=T^{-1}(\theta_t\omega,u(t,\omega,T(\omega,\widetilde {M}^c(\omega))))\\
&=T^{-1}(\theta_t\omega, u(t,\omega,M^c(\omega)))\subset
T^{-1}(\theta_t\omega,M^c(\theta_t\omega)) =\widetilde
{M}^c(\theta_t\omega).
\end{align*}
So $\widetilde {M}^c(\omega)$ is an invariant set.
Note
that
\begin{align*}
\widetilde {M}^c(\omega) &= T^{-1}(\omega,M^c(\omega))\\ &
=\big\{u_0=T^{-1}(\omega, v+h^c(v, \omega) \mid v \in H^c
\big\} \\
&=\big\{u_0= e^{z(\omega)}(v+h^c(v, \omega))\mid v \in
H^c
\big\}\\
&=\big\{u_0= v+e^{z(\omega)}h^c(e^{-z(\omega)}v, \omega)\mid  v \in
H^c \big\},
\end{align*}
which implies that $\widetilde {M}^c(\omega)$ is a center
manifold.
\end{proof}
Note that a local center manifolds is not unique~\cite{Carr}.



\section{Smoothness of center  manifolds} \label{Sec4}
In this section, we prove that for each $\omega\in \Omega$\,, $M^c(\omega)$~is
a $C^k$~center manifold.

\begin{theorem} \label{Thm(4.1)}   Assume that $G$~is~$C^k$ in~$u$.
If $\gamma < k\eta < \min\{\beta, \alpha\}$ and
\[
K \Lip_u
G\left(\frac{1}{i\eta-\gamma}+\frac{1}{\beta-i\eta}+\frac{1}{\alpha-i\eta}\right)< 1 \quad\text{for
all }  1\leq i\leq k,
\]
then $M^c(\omega)$~is a $C^k$ center manifold for the
random
evolutionary equation~(\ref{eq(2.5)}),  and $Dh^c(0,\omega)=0$\,.\end{theorem}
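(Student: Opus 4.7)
The plan is to prove $C^k$~smoothness by induction on the order of differentiation, formally differentiating the Lyapunov--Perron fixed-point equation~\eqref{eq(3.2)} $i$~times for $i=1,\ldots,k$ and showing at each stage that the candidate derivative is the unique fixed point of a contractive linear operator in a suitable weighted function space. At order~$i$ I expect $D^i_v u(\cdot, v, \omega)$ to live in $L^i(H^c, C_{i\eta})$, the Banach space of bounded symmetric $i$-linear maps from~$H^c$ into the slowly varying space~$C_{i\eta}$. The need to enlarge the weight from $\eta$ to $i\eta$ arises from Fa\`a di Bruno's formula applied to $G(\theta_s\omega, u(s,v,\omega))$: each chain-rule factor $D^j G\cdot(D^{m_1}_v u,\ldots,D^{m_j}_v u)$ with $m_1+\cdots+m_j=i$ contributes total weight~$i\eta$, and the hypothesis $k\eta<\min\{\beta,\alpha\}$ is precisely what keeps the integrals analogous to those in~\eqref{eq(3.6)} convergent up to order~$k$.

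For the inductive step, assuming $D^j_v u\in L^j(H^c, C_{j\eta})$ has been constructed for $j<i$, I would differentiate~\eqref{eq(3.2)} formally $i$~times to obtain a linear Lyapunov--Perron equation for a candidate $U^{(i)}$. Its inhomogeneity is a polynomial in the previously constructed $U^{(1)},\ldots,U^{(i-1)}$ and in $D^j_u G$ for $j\leq i$, which lies in $L^i(H^c, C_{i\eta})$ by the inductive hypothesis; its linear part has exactly the structure of the operator $J^c(\cdot, v)$ from~\eqref{eq(3.22)} but measured in the heavier weight. Rerunning the estimate~\eqref{eq(3.6)} with $\eta$ replaced by $i\eta$ produces contraction constant $K\Lip_u G\bigl(\tfrac{1}{i\eta-\gamma}+\tfrac{1}{\beta-i\eta}+\tfrac{1}{\alpha-i\eta}\bigr)<1$ by hypothesis, so the Banach fixed-point theorem yields a unique $U^{(i)}\in L^i(H^c, C_{i\eta})$.

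The main obstacle is verifying that the formally constructed $U^{(i)}$ is truly the $i$th Fr\'echet derivative of $v\mapsto u(\cdot, v, \omega)$, not merely a formal solution of the differentiated equation. I would form the Taylor remainder
\begin{equation*}
R_i(\cdot, v, \bar v, \omega) = u(\cdot, v+\bar v, \omega) - \sum_{j=0}^{i}\tfrac{1}{j!}\, U^{(j)}(\cdot, v, \omega)(\bar v)^{\otimes j},
\end{equation*}
substitute it into~\eqref{eq(3.2)}, isolate the same contractive linear operator that produced $U^{(i)}$, and close a Gronwall-type estimate in the $C_{i\eta}$-norm to deduce $|R_i|_{C_{i\eta}}=o(|\bar v|^i)$ as $\bar v\to 0$. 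The delicate bookkeeping lies in controlling the Taylor remainders of~$G$ through the composition $G(\theta_s\omega, u(s,v,\omega))$ in the heavier-weighted spaces, but the strict spectral-gap margin in the hypothesis provides enough slack to make the estimate routine.

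Finally, $h^c(v,\omega)=(P^s+P^u)\,u(0,v,\omega)$ inherits $C^k$~smoothness because pointwise evaluation at $t=0$ is a bounded linear map from~$C_{i\eta}$ into~$H$, so $D^i h^c(\cdot,\omega)=(P^s+P^u)\,U^{(i)}(0,\cdot,\omega)$ is continuous for each $i\leq k$. For the tangency, $F(0)=0$ forces $G(\cdot,0)=0$ and hence $u(\cdot,0,\omega)\equiv 0$; the $i=1$ variational equation evaluated at $v=0$ then reduces under the standard additional assumption $DF(0)=0$ (inherent in the center-manifold framework) to $U^{(1)}(t,0,\omega)=\Psi_A(t,0)|_{H^c}$, whose $H^s\oplus H^u$ component at $t=0$ vanishes by the trichotomy projections $P^s P^c=P^u P^c=0$, so $Dh^c(0,\omega)=0$.
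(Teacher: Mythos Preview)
Your proposal is correct and follows essentially the same inductive Lyapunov--Perron strategy as the paper: differentiate the fixed-point equation~\eqref{eq(3.2)}, observe that the linearised operator is a contraction on $C_{i\eta}$ with constant $K\Lip_uG\bigl(\tfrac1{i\eta-\gamma}+\tfrac1{\beta-i\eta}+\tfrac1{\alpha-i\eta}\bigr)$, and verify by a remainder estimate that the fixed point is the genuine Fr\'echet derivative. The paper's execution of your ``main obstacle'' differs in one technical device worth knowing: to show the Taylor remainder is $o(|v-v_0|)$ it does not close a Gronwall bound directly in~$C_\eta$, but instead uses the strict inequality in the hypothesis to place the fixed point in the slightly smaller space $C_{\eta-2\delta}$ and estimates the remainder in the intermediate space~$C_{\eta-\delta}$; the extra factor $e^{-\delta|s|}$ lets one truncate the time integrals at large~$N$ (the paper splits into ten pieces $I_1,\ldots,I_5'$) and then invoke continuity of $D_uG$ on the remaining compact interval. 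Your phrase ``the strict spectral-gap margin \ldots\ provides enough slack'' is exactly this idea, so the two arguments are the same in substance. On tangency you are more careful than the paper: you correctly note that $Dh^c(0,\omega)=0$ rests on $D_uG(\cdot,0)=0$, i.e.\ $DF(0)=0$, whereas the paper simply asserts tangency follows from~\eqref{eq(4.2)} without isolating that hypothesis.
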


\begin{proof} We prove this theorem by induction.
First, we consider $k=1$\,.
Since
\[
K \Lip_u
G\left(\frac{1}{\eta-\gamma}+\frac{1}{\beta-\eta}+\frac{1}{\alpha-\eta}\right)< 1
\]
there exists a small number $\delta>0$ such that $\gamma <
\eta-\eta'<\min\{\beta,\alpha\}$ and for all  $ 0 \leq \eta' \leq 2\delta$\,,
\[
K \Lip_u
G\left[\frac{1}{(\eta-\eta')-\gamma}+\frac{1}{\beta-(\eta-\eta')}+\frac{1}{\alpha-(\eta-\eta')}\right]<
1\,.
\]
Thus, $J^s(\cdot,v)$ defined in the proof of Theorem~\ref{Thm(3.1)} is a uniform contraction in $C_{\eta-\eta'}
\subset  C_\eta$ for any $0\leq \eta'\leq 2\delta$\,.
Therefore,
$u(\cdot, v,\omega)\in C_{\eta-\eta'}$\,.
For $v_0\in H^c$, we
define two operators: let
\[
Sv_0=\Psi_A(t,0)v_0 \,,
\]
and for $v\in C_{\eta-\delta}$ let
\begin{align*}
T v&=\int_0^t \Psi_A(t,s)
P^cD_uG(\theta_s\omega,u(s,v_0,\omega)) v\, ds \\
&\quad {}+\int_{-\infty}^{+\infty} \Psi_0(t,s)
B(t-s)D_uG(\theta_s\omega,u(s,v_0,\omega)) v \,ds\,.
\end{align*}
From the assumption,
$S$~is a bounded linear operator from center subspace~$H^c$ to slowly varying functions space~$C_{\eta-\delta}$.
Using the same arguments in the proof Theorem~\ref{Thm(3.1)}  that
$J^c$~is a contraction, we have that $T$~is a bounded linear
operator from~$C_{\eta-\delta}$ to itself and
\[
\|T\| \leq K \Lip_u
G\left(\frac{1}{\eta-\delta-\gamma}+\frac{1}{\beta-(\eta-\delta)}+\frac{1}{\alpha-(\eta-\delta)}\right)<
1\,,
\]
which implies that the operator $\operatorname{id}-T$ is invertible in~$C_{\eta-\delta}$.
For $v, v_0\in H^c$, we set
\begin{align*}
I={}&\int_0^t \Psi_A(t,s)
P^c\Big[G(\theta_s\omega,u(s,v,\omega))-
G(\theta_s\omega,u(s,v_0,\omega))\\
&\qquad{}-D_uG(\theta_s\omega,u(s,v_0,\omega))
(u(s,v,\omega)-u(s,v_0,\omega))\Big]\,ds\\
&{} +\int^{+\infty}_{-\infty} \Psi_0(t,s)
B(t-s)\Big[G(\theta_s\omega,u(s,v,\omega))-
G(\theta_s\omega,u(s,v_0,\omega))\\
&\qquad{}-D_uG(\theta_s\omega,u(s,v_0,\omega))
(u(s,v,\omega)-u(s,v_0,\omega))\Big]\,ds\,.
\end{align*}

We obtain
\begin{align}\label{eq(4.1)}
\begin{split}
&u(\cdot,v, \omega)-u(\cdot,v_0,
\omega)-T (u(\cdot,v,
\omega)-u(\cdot,v_0,
\omega))
=S(v-v_0) + I,\\
\end{split}
\end{align}
which yields
\begin{align*}
u(\cdot,v, \omega)-u(\cdot,v_0,
\omega)=(\operatorname{id}-T)^{-1}S(v-v_0)+(\operatorname{id}-T)^{-1}I.
\end{align*}
If  $|I|_{C_{\eta-\delta}}=o(|v-v_0|)$ as $v\to
v_0$, then  $u(\cdot,v, \omega) $ is differentiable in~$v$ and its
derivative satisfies $D_v u(t, v,\omega)\in L(H^c,
C_{\eta-\delta})$,  where $L(H^c, C_{\eta-\delta})$ is the
usual space of bounded linear operators and
\begin{align}\label{eq(4.2)}
\begin{split}
D_v u(t, v,\omega)& =\Psi_A(t,0)
v  +\int_0^t \Psi_A(t,s)
P^cD_uG(\theta_s\omega,u(s,v,\omega))D_v u(s,v,\omega)\,ds\\& \quad+\int^{+\infty}_{-\infty} \Psi_0(t,s)
B(t-s)D_uG(\theta_s\omega,u(s,v,\omega)) D_v u(s,v,\omega) \,ds\,.
\end{split}
\end{align}

The tangency condition $Dh^c(0,\omega)=0$ is from  equation~\eqref{eq(4.2)}.

Now we prove that \begin{eqnarray}|I|_{C_{\eta-\delta}} = o(|v-v_0|)\label{Claim1}\end{eqnarray} as
$v\to v_0$\,.
We divide~$I$ into several sufficient small parts.
Let $N$ be a large  positive number to be chosen
later and define the following ten integrals
\begin{align*}
I_1={}&\exp\left[{-(\eta-\delta)|t|-\int_0^t z(\theta_s\omega) \,ds}\right]
\bigg |\int_N^t \Psi_A(t,s)
 P^c\Big[G(\theta_s\omega,u(s,v,\omega))
 \\ &{}
-G(\theta_s\omega,u(s,v_0,\omega))-D_uG(\theta_s\omega,u(s,v_0,\omega))
(u(s,v,\omega) -u(s,v_0,\omega))\Big]\,ds\bigg|
\end{align*}
for $t\geq N$\,.
\begin{align*}
I_1'={}&\exp\left[{-(\eta-\delta)|t|-\int_0^t z(\theta_s\omega) \,ds}\right]
\bigg|\int^{-N}_t \Psi_A(t,s)
P^c\Big[G(\theta_s\omega,u(s,v,\omega))\\&{} -
G(\theta_s\omega,u(s,v_0,\omega))-D_uG(\theta_s\omega,u(s,v_0,\omega))
(u(s,v,\omega)-u(s,v_0,\omega))\Big]\,ds\bigg|
\end{align*}
for $t\leq -N$\,.
\begin{align*}
I_2={}&\exp\left[{-(\eta-\delta)|t|-\int^t_0 z(\theta_s\omega) \,ds}\right]
\bigg|\int_0^N \Psi_A(t,s)
P^c\Big[G(\theta_s\omega,u(s,v,\omega)) \\&{}-
G(\theta_s\omega,u(s,v_0,\omega))-D_uG(\theta_s\omega,u(s,v_0,\omega))
(u(s,v,\omega)-u(s,v_0,\omega))\Big]\,ds\bigg|
\end{align*}
for $0\leq t \leq N$\,.
\begin{align*}
I_2'={}&\exp\left[{-(\eta-\delta)|t|-\int^t_0 z(\theta_s\omega) \,ds}\right]
\bigg|\int_{-N}^0 \Psi_A(t,s)
P^c\Big[G(\theta_s\omega,u(s,v,\omega)) \\&{}-
G(\theta_s\omega,u(s,v_0,\omega))-D_uG(\theta_s\omega,u(s,v_0,\omega))
(u(s,v,\omega)-u(s,v_0,\omega))\Big]\,ds\bigg|
\end{align*}
for $-N \leq t \leq 0$\,.

Let $\overline N$ be a large  positive number to be chosen later.
For
$| t| \leq \overline N$\,, we set
\begin{align*}
I_3={}&\exp\left[{-(\eta-\delta)|t|-\int^t_0 z(\theta_s\omega) \,ds}\right]
\bigg|\int^{\overline N}_t [-\Psi_A(t,s)]
P^u\Big[G(\theta_s\omega,u(s,v,\omega))
\\&{}
-G(\theta_s\omega,u(s,v_0,\omega))-D_uG(\theta_s\omega,u(s,v_0,\omega))
(u(s,v,\omega)-u(s,v_0,\omega))\Big]\,ds\bigg|.
\\
I_3'={}&\exp\left[{-(\eta-\delta)|t|-\int^t_0 z(\theta_s\omega) \,ds}\right]
\bigg|\int_{-\overline N}^t \Psi_A(t,s)
P^s\Big[G(\theta_s\omega,u(s,v,\omega))\\&{} -
G(\theta_s\omega,u(s,v_0,\omega))-D_uG(\theta_s\omega,u(s,v_0,\omega))
(u(s,v,\omega)-u(s,v_0,\omega))\Big]\,ds\bigg|.
\\
I_4={}&\exp\left[{-(\eta-\delta)|t|-\int^t_0 z(\theta_s\omega) \,ds}\right]
\bigg|\int^\infty_{\overline N} [\Psi_A(t,s)]
P^u\Big[G(\theta_s\omega,u(s,v,\omega))\\&{} -
G(\theta_s\omega,u(s,v_0,\omega))-D_uG(\theta_s\omega,u(s,v_0,\omega))
(u(s,v,\omega) -u(s,v_0,\omega))\Big]\,ds\bigg|.
\\
I_4'={}&\exp\left[{-(\eta-\delta)|t|-\int^t_0 z(\theta_s\omega) \,ds}\right]
\bigg|\int_{-\infty}^{-\overline N} \Psi_A(t,s)
P^s\Big[G(\theta_s\omega,u(s,v,\omega))\\&{} -
G(\theta_s\omega,u(s,v_0,\omega))-D_uG(\theta_s\omega,u(s,v_0,\omega))
(u(s,v,\omega) -u(s,v_0,\omega))\Big]\,ds\bigg|.
\end{align*}

For $|t| \geq \overline N$\,, we set
\begin{align*}
I_5={}&\exp\left[{-(\eta-\delta)|t|-\int^t_0 z(\theta_s\omega) \,ds}\right]
\bigg|\int^{+\infty}_{t} [\Psi_A(t,s)]
P^u\Big[G(\theta_s\omega,u(s,v,\omega))\\&{} -
G(\theta_s\omega,u(s,v_0,\omega))-D_uG(\theta_s\omega,u(s,v_0,\omega))
(u(s,v,\omega) -u(s,v_0,\omega))\Big]\,ds\bigg|.
\\
I_5'={}&\exp\left[{-(\eta-\delta)|t|-\int^t_0 z(\theta_s\omega) \,ds}\right]
\bigg|\int_{-\infty}^{t} \Psi_A(t,s)
P^s\Big[G(\theta_s\omega,u(s,v,\omega))\\ &{} -
G(\theta_s\omega,u(s,v_0,\omega))-D_uG(\theta_s\omega,u(s,v_0,\omega))
(u(s,v,\omega)-u(s,v_0,\omega))\Big]\,ds\bigg|.
\end{align*}

It is sufficient to show that for any $\epsilon >0$ there is a
$\sigma
>0$ such that if $ |v-v_0| \leq \sigma $\,, then
$|I|_{C_{\eta-\delta}} \leq \epsilon |v-v_0|$.
Note that
\begin{align*}
|I|_{C_{\eta-\delta}}& \leq \sup_{t\geq N} I_1 + \sup_{N\geq t\geq 0}
I_2+ \sup_{t\leq  -N} I_1' + \sup_{-N\leq t\leq 0}
I_2'+ \sup_{ |t|\leq \overline N} I_3+\sup_{ |t|\leq \overline N} I_4
+ \sup_{|t|\geq \overline N} I_5\\&\quad  +\sup_{ |t|\leq \overline N} I_3'+\sup_{  |t|\leq \overline N} I_4'
+ \sup_{|t |\geq \overline N} I_5'.
\end{align*}
A computation similar to~(\ref{eq(3.7)}) implies that
\begin{align*}
I_1 &\leq 2K\Lip_uG \\& \int_N^t
\exp[{(\gamma-(\eta-\delta))|t-s|}]\exp({-\delta |s|})
|u(\cdot, v,\omega)-u(\cdot,v_0,\omega)|_{C_{\eta-2\delta}} \,ds \\
& \leq \frac{2K^2\Lip_uG \exp({-\delta
N})}{(\eta-\gamma-\delta)\left\{1-K \Lip_u
G\left[\frac{1}{(\eta-2\delta)-\gamma}+\frac{1}{\beta-(\eta-2\delta)}+\frac{1}{\alpha-(\eta-2\delta)}\right]\right\}}
|v - v_0 |.
\end{align*}

 Choose~$N$ so large that
\begin{align*}
 \frac{2K^2\Lip_uG \exp({-\delta
N})}{(\eta-\gamma-\delta)\left\{1-K \Lip_u
G\left[\frac{1}{(\eta-2\delta)-\gamma}+\frac{1}{\beta-(\eta-2\delta)}+\frac{1}{\alpha-(\eta-2\delta)}\right]\right\}}
\leq \frac{1}{8} \epsilon.
\end{align*}

Hence for such~$N$ we have that
\begin{equation*}
\sup_{t\geq N}I_1 \leq \frac{1}{8}\epsilon |v-v_0|.
\end{equation*}
Fixing such~$N$, for~$I_2$ we have that
\begin{align*}
I_2 &\leq K\int_0^N \exp[{(\gamma+(\eta-\delta))|t-s|}]\Big\{
\int^1_0\big |
D_uG(\theta_s\omega,\tau u(s,v,\omega)+(1-\tau)\\
&\quad u(s,v_0,\omega))  -D_uG(\theta_s\omega,u(s,v_0,\omega))\big |\,d\tau\Big\}
|u(\cdot, v,\omega)-u(\cdot,v_0,\omega)|_{C_{\eta-\delta}} \,ds \\
& \leq \frac{K^2|v-v_0|}{1-K \Lip_u
G\left[\frac{1}{\eta-\delta-\gamma}+\frac{1}{\beta-(\eta-\delta)}+\frac{1}{\alpha-(\eta-\delta)}\right]}\\&
\quad \int_0^N \exp[{(\gamma+(\eta-\delta))|t-s|}]\Big\{ \int^1_0\big |
D_uG(\theta_s\omega,\tau
u(s,v,\omega)+(1-\tau)\\&
\quad u(s,v_0,\omega))-D_uG(\theta_s\omega,u(s,v_0,\omega))\big | \,d\tau\Big\}\,ds\,.
\end{align*}
From the continuity of  the integrand in~$(s, v)$,   the last integral is continuous at the point~$v_0$.
Thus, we have that there
is a $\sigma_1
>0$ such that if $|v-v_0| \leq \sigma_1$\,, then
\begin{equation*}
\sup_{N\geq t\geq 0}I_2 \leq \frac{1}{8}\epsilon |v-v_0|.
\end{equation*}
Therefore, if  $|v-v_0| \leq \sigma_1$\,, then
\begin{equation*}
\sup_{t\geq N} I_1 + \sup_{N\geq t\geq 0}
I_2 \leq \frac{1}{4}\epsilon
|v-v_0|.
\end{equation*}
In the same way,   there
is a $\sigma_1'
>0$ such that if  if  $|v-v_0| \leq \sigma_1'$, then
\begin{equation*}
\sup_{t\leq -N} I_1 + \sup_{-N\leq t\leq 0}
I_2 \leq \frac{1}{4}\epsilon
|v-v_0|.
\end{equation*}
Similarly, by choosing~$\overline N$ to be sufficiently large,
\begin{equation*}
\sup_{| t|\leq \overline N} I_4 + \sup_{|t|\geq \overline N}I_5 \leq
\frac{1}{8}\epsilon|v-v_0|,
\end{equation*}
\begin{equation*}
\sup_{| t|\leq\overline N} I_4' + \sup_{|t|\geq \overline N}I_5' \leq
\frac{1}{8}\epsilon|v-v_0|,
\end{equation*}
and for fixed such~$\overline N$, there exists $\sigma_2>0$ such that
if $|v-v_0| \leq \sigma_2$\,, then
\begin{equation*}
\sup_{  |t|\leq \overline N}I_3  \leq \frac{1}{8}\epsilon
|v_1-v_2|
\quad\text{and}\quad
\sup_{| t|\geq \overline N}I_3' \leq \frac{1}{8}\epsilon
|v_1-v_2|.
\end{equation*}
Taking $\sigma = \min \{\sigma_1, \sigma_1', \sigma_2\}$, we have that if
$|v-v_0| \leq \sigma$\,, then
\begin{equation*}
|I|_{C_{\eta-\delta}} \leq  \epsilon |v-v_0|.
\end{equation*}
Therefore  $|I|_{C_{\eta-\delta}} = o(|v-v_0|)$ as $v\to
v_0$\,.

We now prove that $D_v u(t, v,\omega)$~is
continuous from the center subspace~$H^c$ to slowly varying functions space~$C_\eta$.
For $v, v_0 \in H^c$,
using~(\ref{eq(4.2)}),
\begin{align}\label{eq(4.3)}
\begin{split}
&D_v u(t, v,\omega)-D_v u(t,v_0,\omega)\\
&=\int_0^t \Psi_A(t,s)
P^c\Big(D_uG(\theta_s\omega,u(s,v,\omega))D_v
u(s,v,\omega)\\&\qquad{}  -
D_uG(\theta_s\omega,u(s,v_0,\omega))D_v u(s,v_0,\omega)\Big)\,ds\\
&\quad{}+\int_{-\infty}^\infty \Psi_0(t,s)
 B(t-s)
\Big(D_uG(\theta_s\omega,u(s,v,\omega))D_v
u(s,v,\omega)\\&\qquad{}  -
D_uG(\theta_s\omega,u(s,v_0,\omega))D_v
u(s,v_0,\omega)\Big)\,ds \\
&=\int_0^t\Psi_A(t,s)
P^c\Big(D_uG(\theta_s\omega,u(s,v,\omega))
\\&\qquad{} (D_v
u(s,v,\omega)- D_v u(s,v_0,\omega))\Big)\,ds\\
&\quad+\int_{-\infty}^\infty \Psi_0(t,s)
 B(t-s)
\Big(D_uG(\theta_s\omega,u(s,v,\omega))
\\&\qquad{}  (D_v u(s,v,\omega)-
D_v u(s,v_0,\omega))\Big)\,ds + \bar I,
\end{split}
\end{align}
where
\begin{align*} \bar I ={}&
\int_0^t \Psi_A(t,s)
P^c\big(D_uG(\theta_s\omega,u(s,v,\omega))\\ & \qquad{}
-D_uG(\theta_s\omega,u(s,v_0,\omega)
\big)D_v u(s,v_0,\omega)\,ds\\
&{} +\int^{+\infty}_{-\infty} \Psi_0(t,s)
B(t-s)\big(D_uG(\theta_s\omega,u(s,v,\omega))\\ & \qquad{}
-D_uG(\theta_s\omega,u(s,v_0,\omega)
\big)D_v u(s,v_0,\omega)\,ds\,.
\end{align*}
 Then from estimating $|D_v u(\cdot, v,\omega)-D_v u(\cdot,v_0,\omega)|_{L(H^c,
 C_\eta)}$, we have
\begin{align*}
&|D_v u(\cdot, v,\omega)-D_v u(\cdot,v_0,\omega)|_{L(H^c, C_\eta)}  \leq \frac{|\bar I|_{L(H^c, C_\eta)}}{1- K \Lip_u
G\left(\frac{1}{\eta-\gamma}+\frac{1}{\beta-\eta}+\frac{1}{\alpha-\eta}\right)}.
\end{align*}

Using the same argument we used for the last claim of equation~\eqref{Claim1}, we obtain that
$|\bar I|_{L(H^c, C_\eta)} = o(|v-v_0|)$ as $v \to v_0$\,.
Hence
$D_v u(t, v,\omega)$~is continuous from the center space~$H^c$ to bounded linear operators space~${L(H^c,
C_\eta)}$.
Therefore, $u(t,v,\omega)$ is~$C^1$ from~$H^c$ to~$C_\eta$.

Now we show that~$u$ is~$C^k$ from the center space~$H^c$ to slowly varying functions space~$C_{k\eta}$ by induction for $k\ge 2$\,.
By the induction
assumption, we know that~$u$ is~$C^{k-1}$ from~$H^c$ to~$C_{(k-1)\eta}$ and the   $(k-1)$st~derivative~$D^{k-1}_v
u(t, v,\omega)$ satisfies the following equation.
\begin{align*}
D_v^{k-1} u ={}& \int^t_0\Psi_A(t,s)
P^c(D_uG(\theta_s\omega,u)D_v^{k-1} u\, ds\\
&{} + \int^{+\infty}_{-\infty} \Psi_0(t,s)
B(t-s)D_uG(\theta_s\omega,u)D_v^{k-1} u\, ds\\
={}& \int^t_0\Psi_A(t,s)
P^cR_{k-1}(s, v, \omega) \,ds\\
&{} + \int^{+\infty}_{-\infty} \Psi_0(t,s)
B(t-s) R_{k-1}(s, v, \omega) \,ds\,,
\end{align*}
where
\begin{equation*}
R_{k-1}(s,v,\omega) =  \sum_{i=0}^{k-3}\begin{pmatrix} k-2\\i
\end{pmatrix} D_v^{k-2-i} \big(D_uG(\theta_s
\omega,u(s,v,\omega))\big) D_v^{i+1}u(s,v,\omega).
\end{equation*}
Note that $D^i_v u \in C_{i\eta} $ for $i=1,\ldots, k-1$\,,
from the induction hypothesis.
Thus, using~$G$ is~$C^k$, we  verify that $R_{k-1}(\cdot,v,\omega) \in
L^{k-1}\big(H^c,C_{(k-1)\eta}\big)$ and is~$C^1$  with respect to~$v$, where
$L^{k-1}\big(H^c,C_{(k-1)\eta}\big)$ is the usual space of
bounded $k-1$ linear forms.
Since 
\[
K \Lip_u
G\left(\frac{1}{i\eta-\gamma}+\frac{1}{\beta-i\eta}+\frac{1}{\alpha-i\eta}\right)< 1 \quad\text{for
all } 1\leq i\leq k.
\]
The same argument used in the case
$k=1$  shows that $ D^{k-1}_v u(\cdot, v,\omega)$ is~$C^1$ from~$H^c$ to~$L^k(H^c,C_{k\eta})$.
This completes the
proof.
\end{proof}

\begin{theorem} \label{Thm(4.2)}
Assume that the nonlinearity~$F(u)$ in equation~\eqref{eq(2.1)} is  $C^k$~smooth.
If $\gamma < k\eta < \min\{\beta,\alpha\}$ and
\[
K  \Lip
F \left(\frac{1}{i\eta-\gamma}+\frac{1}{\beta-i\eta}+\frac{1}{\alpha-i\eta}\right)< 1 \quad\text{for
all }  1\leq i\leq k,
\]
then
$\widetilde M^c(\omega)=T^{-1}(\omega,M^c(\omega))$  is a $C^k$~center
 manifold for the stochastic partial differential  equation~(\ref{eq(2.1)}).
\end{theorem}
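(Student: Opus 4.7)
The plan is to reduce Theorem~\ref{Thm(4.2)} to Theorem~\ref{Thm(4.1)} via the coordinate transformation~$T$, and then check that pushing the resulting graph back through the transformation preserves $C^k$~smoothness together with the tangency condition.

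First, I would verify that the hypotheses on~$F$ imply those of Theorem~\ref{Thm(4.1)} for~$G$. Writing $c(\omega):=\exp[z(\omega)]$, which is a positive scalar for each $\omega\in\Omega$, the definition $G(\omega,u)=c(\omega)^{-1}F(c(\omega)u)$ yields by the chain rule and linearity
\[
D_uG(\omega,u)=DF(c(\omega)u),\qquad D^j_uG(\omega,u)=c(\omega)^{j-1}D^jF(c(\omega)u)
\]
for $1\le j\le k$. In particular, $\Lip_uG\le \Lip F$ uniformly in~$\omega$, so the smallness assumption of Theorem~\ref{Thm(4.2)} is at least as strong as the corresponding hypothesis of Theorem~\ref{Thm(4.1)}, and $G(\omega,\cdot)$ inherits the $C^k$~regularity of~$F$. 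Applying Theorem~\ref{Thm(4.1)} then gives that the center manifold
\[
M^c(\omega)=\{v+h^c(v,\omega)\mid v\in H^c\}
\]
of the random evolution equation~\eqref{eq(2.5)} is $C^k$, with $h^c(0,\omega)=0$ and $D_vh^c(0,\omega)=0$.

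Next, I would transfer this regularity back to~\eqref{eq(2.1)}. Theorem~\ref{Thm(3.2)} already supplies the representation
\[
\widetilde M^c(\omega)=\bigl\{\,v+c(\omega)\,h^c\bigl(c(\omega)^{-1}v,\omega\bigr)\,\big|\, v\in H^c\,\bigr\},
\]
so it suffices to define $\widetilde h^c(v,\omega):=c(\omega)\,h^c(c(\omega)^{-1}v,\omega)$ and check four things: (i)~$\widetilde h^c(\cdot,\omega):H^c\to H^u\oplus H^s$, (ii)~$\widetilde h^c$ is $C^k$ in~$v$, (iii)~$\widetilde h^c(0,\omega)=0$, and (iv)~$D_v\widetilde h^c(0,\omega)=0$. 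Point~(i) is immediate because the projections $P^c$, $P^u$, $P^s$ commute with multiplication by the real scalar $c(\omega)$; (ii) follows from the chain rule, since $\widetilde h^c$ is the composition of the $C^k$ map~$h^c(\cdot,\omega)$ with two bounded linear scalar maps; and (iii)--(iv) are direct evaluations, giving $\widetilde h^c(0,\omega)=c(\omega)h^c(0,\omega)=0$ and $D_v\widetilde h^c(0,\omega)=D_vh^c(0,\omega)=0$.

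The only genuinely delicate point is the first step: one must confirm that the scalar conjugation by~$c(\omega)$ does not destroy the smallness condition required by Theorem~\ref{Thm(4.1)} nor move any of the higher derivatives outside the range of usability. Both concerns are resolved by the explicit formula $D_u^jG=c(\omega)^{j-1}D^jF(c(\omega)u)$ and the uniform bound $\Lip_uG\le \Lip F$; thereafter, the argument reduces to a citation of Theorem~\ref{Thm(4.1)} followed by a routine chain-rule computation, and I anticipate no further obstacle.
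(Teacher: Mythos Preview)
Your proposal is correct and follows essentially the same route as the paper: invoke Theorem~\ref{Thm(4.1)} for the transformed equation~\eqref{eq(2.5)} (using that $\Lip_uG=\Lip F$ and $G(\omega,\cdot)$ inherits $C^k$~regularity from~$F$), then push the graph back through the scalar transformation~$T^{-1}$ via the formula $\tilde h^c(v,\omega)=e^{z(\omega)}h^c(e^{-z(\omega)}v,\omega)$ and observe that composition with these linear scalar maps preserves $C^k$~smoothness. Your write-up is in fact more thorough than the paper's, which leaves the verification of $\Lip_uG\le\Lip F$, the $C^k$~regularity of~$G$, and the tangency condition $D_v\tilde h^c(0,\omega)=0$ implicit.
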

\begin{proof}
From the construction of center manifolds, we obtain
\begin{equation*}
\widetilde M^c(\omega)=\big\{v+\tilde h^c(v, \omega) \mid
v\in H^c\big\},
\end{equation*}
where $\tilde h^c(v, \omega)=\exp[z(\omega)]h^c(\exp[{-z(\omega)]}v, \omega)$.
Note that
$h^c(\cdot, \omega)$ is~$C^k$,  then $\tilde h^c(\cdot, \omega)$ is~$C^k$.
We conclude $\widetilde M^c(\omega)$  is a $C^k$~center
 manifold for the stochastic partial differential  equation~(\ref{eq(2.1)}).

\end{proof}

\section{Exponential attraction  principle} \label{Sec5}
This section proves the stability of the random evolution equation~\eqref{eq(2.5)}.
Our main result is Theorem~\ref{th5.3}, which shows that the dynamic behavior of equation~\eqref{eq(2.5)} is exponentially approximated by the solution of on  stochastic center manifold.
We do not assume $H^s=\emptyset$ or $H^u=\emptyset$\,.
Instead, we project the solution of equation~\eqref{eq(2.5)} to the stable  and unstable subspaces.
Lemma~\ref{lemma9} is a week approach solutions version as it only concerns solutions approach~$M^c(\omega)$.
It does not assert solutions.

\begin{lemma} \label{lemma9}
Let $M^c(\omega)$ be a  locally stochastic center manifold for the stochastic evolution equation~\eqref{eq(2.5)}, then there exist positive constants~$U$  and~$\mu$ such that for $\omega \in \Omega$\,,
\begin{eqnarray*}
|P^su(t,u_0, \omega)-P^sh^c(P^cu(t,u_0,\omega),\omega)|\leq U\exp({-\mu t})|P^su_0-P^sh^c(P^cu_0,\omega)|,
\end{eqnarray*}
$t\geq 0$ large enough.
\end{lemma}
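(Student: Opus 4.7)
The plan is to control the stable deviation
\[
\xi(t):=P^s u(t,u_0,\omega)-P^s h^c\bigl(P^c u(t,u_0,\omega),\theta_t\omega\bigr)
\]
(the graph over $\theta_t\omega$ is the natural manifold at time~$t$ by the invariance established in Theorem~\ref{Thm(3.1)}) by comparing $u(t,u_0,\omega)$ with a \emph{shadow} trajectory $\bar u(t):=u(t,\bar u_0,\omega)$ issued from the base point $\bar u_0:=v_0+h^c(v_0,\omega)\in M^c(\omega)$, where $v_0:=P^c u_0$. Invariance gives $P^s\bar u(t)=P^s h^c(P^c\bar u(t),\theta_t\omega)$, which yields the decomposition
\[
\xi(t)=\bigl[P^s u(t)-P^s\bar u(t)\bigr]+\bigl[P^s h^c(P^c\bar u(t),\theta_t\omega)-P^s h^c(P^c u(t),\theta_t\omega)\bigr].
\]
Crucially, $P^s u_0-P^s\bar u_0=\xi(0)$ and $P^c u_0=P^c\bar u_0$, so the only deviation at time zero between $u$ and $\bar u$ lives in the stable-plus-unstable directions.

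First, I would estimate the shadow-difference term. Variation of constants on $P^s(u-\bar u)$ gives
\[
P^s u(t)-P^s\bar u(t)=\Psi_A(t,0)\,\xi(0)+\int_0^t\Psi_A(t,\tau)P^s\bigl[G(\theta_\tau\omega,u(\tau))-G(\theta_\tau\omega,\bar u(\tau))\bigr]\,d\tau,
\]
whose homogeneous part is bounded by $K\,\Psi_0(t,0)\,e^{-\beta t}|\xi(0)|$ via~(\ref{trichotomy3}), and whose convolution is bounded by $K\Lip_u G\int_0^t e^{-\beta(t-\tau)}\Psi_0(t,\tau)\,|u(\tau)-\bar u(\tau)|\,d\tau$. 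For the graph-difference term, the Lipschitz constant of $h^c(\cdot,\theta_t\omega)$ inherited from~(\ref{eq(3.7)}) controls the piece by $\Lip h^c\cdot|P^c u(t)-P^c\bar u(t)|$. Both $|u(\tau)-\bar u(\tau)|$ and $|P^c u(t)-P^c\bar u(t)|$ are then bounded in the $C_\eta$-norm by a constant multiple of $|\xi(0)|$ using exactly the fixed-point estimate~(\ref{eq(3.7)}) from the proof of Theorem~\ref{Thm(3.1)}, since $u$ and $\bar u$ both lie in $C_\eta$ and agree in their center component at time zero.

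Assembling these pieces produces a Gronwall-type integral inequality of the form
\[
|\xi(t)|\leq K\Psi_0(t,0)e^{-\beta t}|\xi(0)|+C\Lip_u G\int_0^t e^{-\beta(t-\tau)}\Psi_0(t,\tau)e^{\eta\tau}\Psi_0(\tau,0)|\xi(0)|\,d\tau+C'e^{\eta t}\Psi_0(t,0)|\xi(0)|,
\]
and, since $\gamma<\eta<\min\{\beta,\alpha\}$ implies $\beta-\eta>0$ and Lemma~\ref{lem(2.1)} guarantees that $\Psi_0$ grows sub-exponentially in~$t$, I would choose any $\mu\in(0,\beta-\eta)$: the sub-exponential $\Psi_0$-factors are absorbed into a constant~$U$ once $t$ is large enough, and the smallness condition~(\ref{pr1}) on $\Lip_u G$ ensures the convolution does not destroy the decay, yielding the advertised bound $|\xi(t)|\leq U e^{-\mu t}|\xi(0)|$.

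The main obstacle is precisely this exponent competition: the $C_\eta$-norm control of $u-\bar u$ grows like $e^{\eta t}\Psi_0(t,0)$, so the stable exponent $-\beta$ must strictly dominate the center growth $\eta$, and the constant $U$ inevitably absorbs the random sub-exponential factor $\Psi_0$, which is why the statement is asserted only for $t$ sufficiently large rather than for every $t\geq 0$. A secondary subtlety is that the $P^u$ component of $u-\bar u$ is controlled via a two-sided integral from $\tau$ to $+\infty$ rather than a forward integral, so the forward-time estimate of $|u-\bar u|_{C_\eta}$ genuinely requires the global slowly varying bound from Theorem~\ref{Thm(3.1)} and not merely forward Gronwall data; managing this asymmetry while keeping all exponents consistent is the delicate bookkeeping step of the proof.
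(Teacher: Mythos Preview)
Your shadow-trajectory approach has a genuine gap. The estimate~(\ref{eq(3.7)}) applies only to fixed points of $J^c$ in $C_\eta$, and by the very definition of $M^c(\omega)$ the trajectory $u(\cdot,u_0,\omega)$ lies in $C_\eta$ \emph{only} when $u_0\in M^c(\omega)$. For a general $u_0$ neither $u$ nor $u-\bar u$ is slowly varying, so you cannot invoke~(\ref{eq(3.7)}) to bound $|u(\tau)-\bar u(\tau)|$. More concretely, the initial discrepancy $u_0-\bar u_0$ carries an unstable component $P^u u_0-P^u h^c(v_0,\omega)$ that is in no way controlled by $|\xi(0)|=|P^s u_0-P^s h^c(v_0,\omega)|$, and that component grows like $e^{\alpha t}$ forward in time. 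Hence the terms $|u(\tau)-\bar u(\tau)|$ and $|P^c u(t)-P^c\bar u(t)|$ in your decomposition inject a contribution proportional to $|P^u u_0-P^u h^c(v_0,\omega)|$, and the claimed bound by $|\xi(0)|$ alone does not follow. The ``secondary subtlety'' you flag is in fact fatal: the two-sided integral representation for $P^u$ is available only for trajectories already known to lie in $C_\eta$.

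The paper sidesteps this by deriving a direct evolution equation for $X(t):=P^s u(t)-P^s h^c(P^c u(t),\omega)$ via the chain rule. The resulting nonlinearity has the structure
\[
N(X,\theta_t\omega)=P^s\bigl[G(\cdots+X)-G(\cdots)\bigr]+DP^s h^c(\cdots)\bigl[P^c G(\cdots)-P^c G(\cdots+X)\bigr],
\]
where the two appearances of $G$ differ in their arguments by exactly $X$; the Lipschitz bound on $G$ together with the boundedness of $Dh^c$ give $|N|\leq U_1|X|$ with no reference to the unstable component whatsoever. Variation of constants on the stable subspace, the trichotomy bound~(\ref{trichotomy3}), the sublinear growth of $\int_0^t z(\theta_r\omega)\,dr$ from Lemma~\ref{lem(2.1)}, and Gronwall then finish the proof. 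The key idea you are missing is that differentiating the deviation $X$ itself, rather than comparing two full trajectories, keeps the unstable direction out of the estimate entirely.
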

\begin{proof}
Let $X(t,u_0,\omega)=P^su(t,u_0, \omega)-P^sh^c(P^cu(t,u_0,\omega),\omega)$,  then $X$ satisfies the equation
\begin{equation}
\frac {d{X(t,u_0,\omega)}}{dt }=AX+z(\theta_t\omega)X+N( X, \theta_t\omega),\end{equation}
where
\begin{align*}
N(X,\theta_t\omega)={}&P^sG(\theta_t\omega,(\operatorname{id}-P^s)u+X+P^sh^c(P^cu(t,u_0,\omega),\omega))\\&{} -P^sG(\theta_t\omega,(\operatorname{id}-P^s)u+P^sh^c(P^cu(t,u_0,\omega),\omega))\\
&{} +DP^sh^c(P^cu(t,u_0,\omega)),\omega)[
P^cG(\theta_t\omega,(\operatorname{id}-P^s)u\\ \quad &{} +P^sh^c(P^cu(t,u_0,\omega),\omega))\\ \quad&{} -P^cG(\theta_t\omega,(\operatorname{id}-P^s)u+X+P^sh^c(P^cu(t,u_0,\omega),\omega))].
\end{align*}
Since the derivative of  stochastic center manifold is  less than Lipschitz constant~$ \Lip_uG$ and $h^c$~is Lipschitz continuous,  we have  $|N(\omega, X)|<U_1 |X|$ for a positive constant $U_1$.
We obtain
\begin{eqnarray*}
X(t,u_0,\omega)&=&\Psi_A(t,0)X(0,u_0,\omega)+\int_0^t\Psi_A(t,s)N(X,\omega)\,ds\,.\end{eqnarray*}
The conclusion of Lemma~\ref{lem(2.1)} yields  $\int_0^tz(\theta_r\omega)dr<\epsilon t$\,,   $\int_s^tz(\theta_r\omega)dr<\epsilon(t-s)$  for $t$~large enough and  $\beta>\epsilon>0$\,.
Then
\begin{eqnarray*}
|X(t,u_0,\omega)&|\leq &K \exp[{(-\beta+\epsilon) t}]|X(0,u_0,\omega)|\\
&&{}+U_1K\int_0^t\exp[{(-\beta+\epsilon) (t-s)}|X(s,u_0,\omega)|\,ds\,.
\end{eqnarray*}
From  Gronwall's inequality we have the lemma.
\end{proof}
\begin{remark}Similarly we have the conclusion there exist  positive constants~$V$ and~$\nu$ such that  for $\omega \in \Omega$,\begin{equation*}
|P^uu(t,u_0, \omega)-P^uh^c(P^cu(t,u_0,\omega),\omega)|\leq V\exp({\nu t})|P^uu_0-P^uh^c(v,\omega)|,
\end{equation*}
 $t\leq 0$ small enough.
In the finite dimensional case~\cite[Theorem 7.1]{Box}, the conclusions are expressed as the random norm.
\end{remark}
Now we consider the asymptotic behavior of the  stochastic evolution equation~\eqref{eq(2.5)}.
The stochastic dynamical system on the stochastic center manifold  is governed by the following random evolution equation.
\begin{equation}\label{th2}
\dot{v}=P^cAv+z(\theta_t\omega)v+P^cG(\theta_t\omega, v+h^c(v,\omega)).
\end{equation}
If the subspace~$H^c$ is finite dimension,   then equation~\eqref{th2} is an ordinary differential equation.
The following theorem shows that the solution of  stochastic evolution equation~\eqref{eq(2.5)} is exponentially approximated by the solution of equation~\eqref{th2}.
\begin{theorem}\label{th5.3}
Let~$M^c(\omega)$ be a  locally stochastic center manifold for stochastic evolution equation~\eqref{eq(2.5)}, then there are positive  random variables $L_1(\omega)$~and~$L_2(\omega)$ and constants $l_1$~and~$l_2$ such that for initial values $u_0\in H$ and $v_0\in M^c$, the following two conditions hold:
\begin{eqnarray}
&&|P^cu(t,u_0,\omega)-v(t,v_0,\omega)|\leq L_1(\omega) \exp( {-l_1 t}),\label{th31} \\
&&|P^su(t,u_0,\omega)-P^s h^c(v(t,v_0,\omega),\omega)|\leq L_2(\omega) \exp({ -l_2 t}),\label{th311}
\end{eqnarray}
 $t\geq 0$ large enough and $\omega\in \Omega$.
\end{theorem}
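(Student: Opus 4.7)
The plan combines Lemma~\ref{lemma9} (and its remark) with a Gronwall-type tracking estimate for the center component. Given $u_0 \in H$, the natural initial datum for the reduced equation~\eqref{th2} is $v_0 := P^c u_0$, so that the initial discrepancy $w(0) := P^c u(0,u_0,\omega) - v(0,v_0,\omega) = 0$. Subtracting~\eqref{th2} from $P^c$ applied to~\eqref{eq(2.5)}, the quantity $w(t) = P^c u(t,u_0,\omega) - v(t,v_0,\omega)$ satisfies
\[
\dot w = P^c A w + z(\theta_t\omega)\,w + P^c\bigl[G(\theta_t\omega,u(t)) - G(\theta_t\omega, v(t) + h^c(v(t),\omega))\bigr].
\]

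The next step is to bound the nonlinear inhomogeneity by adding and subtracting $P^c G(\theta_t\omega, P^c u(t) + h^c(P^c u(t),\omega))$. The first resulting difference is controlled by $\Lip_u G$ times the distance from the full orbit $u(t)$ to the manifold evaluated at $y = P^c u(t)$; by Lemma~\ref{lemma9} and its remark this distance decays like $C(\omega)\exp(-\mu t)$. The second difference is bounded by $\Lip_u G\,(1+\Lip h^c)\,|w(t)|$ via Lipschitz continuity of~$h^c$. I then apply the variation of constants formula on~$H^c$, using the trichotomy bound $|\exp(At)P^c| \le K\exp(\gamma|t|)$ and the sublinear growth of $\int_0^t z(\theta_r\omega)\,dr$ from Lemma~\ref{lem(2.1)} to absorb the random factor $\Psi_0(t,s)$ into an arbitrarily small exponential. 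Gronwall's inequality then yields
\[
|w(t)| \le L_1(\omega)\,\exp(-l_1 t)
\]
for some $l_1 > 0$, provided $\Lip_u G$ is sufficiently small relative to the trichotomy gap.

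For~\eqref{th311} I would use the triangle inequality:
\[
P^s u(t) - P^s h^c(v(t),\omega) = \bigl[P^s u(t) - P^s h^c(P^c u(t),\omega)\bigr] + \bigl[P^s h^c(P^c u(t),\omega) - P^s h^c(v(t),\omega)\bigr].
\]
The first bracket is bounded by $U\exp(-\mu t)\,|P^s u_0 - P^s h^c(P^c u_0,\omega)|$ from Lemma~\ref{lemma9}, and the second by $\Lip h^c \cdot |w(t)| \le \Lip h^c \cdot L_1(\omega)\exp(-l_1 t)$ via the previous paragraph. Setting $l_2 := \min\{\mu,l_1\}$ and combining the two random prefactors into a single random variable $L_2(\omega)$ delivers~\eqref{th311}.

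The main obstacle is to obtain genuine decay of~$|w(t)|$, because the center semigroup only admits the \emph{growing} bound $\exp(\gamma|t|)$. The success of the Gronwall step relies on the inhomogeneous forcing decaying at rate~$\mu$ strictly larger than the combination of $\gamma$, the random drift contribution $\epsilon$ controlled through Lemma~\ref{lem(2.1)}, and the linearized feedback coefficient $K\Lip_u G(1+\Lip h^c)$. Quantitatively locating an admissible rate $0 < l_1 < \mu - \gamma - \epsilon$ and verifying that $L_1(\omega)$ is $\mathbb{P}$-almost surely finite---given the random exponential integrands involving $\Psi_0(t,s)$ and the initial gap $|P^s u_0 - P^s h^c(P^c u_0,\omega)|$---is the delicate quantitative check in the argument.
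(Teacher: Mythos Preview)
Your proposal is correct and follows essentially the same route as the paper: derive the evolution equation for the center discrepancy, telescope the nonlinear forcing through the graph $h^c(P^cu,\omega)$, use Lemma~\ref{lemma9} for the exponentially decaying off-manifold piece, absorb $\int_0^t z(\theta_r\omega)\,dr$ via Lemma~\ref{lem(2.1)}, and close with Gronwall; inequality~\eqref{th311} is then obtained by exactly the triangle-inequality splitting you describe. The only cosmetic differences are that the paper telescopes the nonlinearity in three steps rather than your two, and it does not make the choice $v_0=P^cu_0$ explicit (though its Gronwall estimate, with the growing factor $\exp[(\gamma+\epsilon)t]$ on $|X'(0)|$, effectively requires it).
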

\begin{proof} We construct a differential equation about the error of the two solutions on centre subspace.
Let $X'(t,u_0,\omega)=P^cu(t,u_0, \omega)-v(t,v_0,\omega)$, which is the errors of the two solutions on centre subspace,  then $X'$ satisfies the equation
\begin{equation}\label{eq31}
\frac {d{X'(t,u_0,\omega)}}{dt }=AX'+z(\theta_t\omega)X'+N'(X',\omega),\end{equation}
where the nonlinear term
\begin{align*}
N'(X',\omega)={}&P^cG(\theta_t\omega,u(t,u_0,\omega))-P^cG(\theta_t\omega, v+h^c(v,\omega)).
\end{align*}
Rewrite the nonlinearity $N'$ as three steps, so that $N'$ is  bounded.
\begin{align*}
N'(X',\omega)={}&P^cG(\theta_t\omega,u(t,u_0,\omega))-P^cG(\theta_t\omega, P^cu+h^c(P^cu,\omega))\\
&{} +P^cG(\theta_t\omega, P^cu+h^c(P^cu,\omega))-P^cG(\theta_t\omega, P^cu+h^c(v,\omega))\\
&{} +P^cG(\theta_t\omega, P^cu+h^c(v,\omega))-P^cG(\theta_t\omega, v+h^c(v,\omega)).
\end{align*}
Rewrite equation~\eqref{eq31} as an integral equation.
\begin{align*}
X'(t,u_0,\omega)=\Psi_A(t,0)X'(0,u_0,\omega)+\int_0^t\Psi_A(t,s)N'(X',\omega)\,ds\,.\end{align*}
Since  $G$~is Lipschitz continuous,  there exist positive constants $V_1$,  $v_1$ and ~$V_2$ such that
\begin{align*}
|N'(X',\omega)|\leq &V_1\exp(-v_1 t)(|P^su_0-P^sh^c(P^cu_0,\omega)|+|P^uu_0-P^uh^c(P^cu_0,\omega)|)\\ &{}+V_2|X'|.
\end{align*}
Lemma~\ref{lem(2.1)} yields  $\int_0^tz(\theta_r\omega)dr<\epsilon t$ and  $\int_s^tz(\theta_r\omega)dr<\epsilon(t-s)$  for $t$~large enough and  $\gamma>\epsilon>0$\,.
It follows that
\begin{align*}
|X'(t,u_0,\omega)|\leq& K \exp[{(\gamma+\epsilon) t}]|X'(0,u_0,\omega)|\\\quad \quad&{} -K(\gamma+\epsilon+v_1)^{-1} \exp({-v_1 t})V_1(|P^su_0-P^sh^c(P^cu_0,\omega)|\\\quad\quad&{}+|P^uu_0-P^uh^c(P^cu_0,\omega)|)\\&{} +V_2K\int_0^t\exp[{(\gamma+\epsilon) (t-s)}|X'(s,u_0,\omega)|\,ds\,.\end{align*}
From  Gronwall's inequality we have inequality~\eqref{th31}.

Now we prove inequality~\eqref{th311},
\begin{align*}
&|P^su(t,u_0,\omega)-P^s h^c(v(t,v_0,\omega),\omega)|\\
&\leq |P^su(t,u_0,\omega)-P^s h^c(P^cu,\omega)|+|P^s h^c(P^cu,\omega)-P^s h^c(v(t,v_0,\omega),\omega)|.
\end{align*}
By   Lemma~\ref{lemma9} and inequality~\eqref{th31} we have inequality~\eqref{th311}.
\end{proof}
\begin{remark}\label{rem1}Similarly we conclude there exist  positive random variable~$L_3(\omega)$ and positive constant~$l_3$ such that \begin{eqnarray}
&&|P^cu(t,u_0,\omega)-v(t,v_0,\omega)|\leq L_3(\omega) \exp( {l_3 t}), \\
&&|P^uu(t,u_0,\omega)-P^u h^c(v(t,v_0,\omega),\omega)|\leq L_4(\omega) \exp({ l_4 t}),
\end{eqnarray}
 $t\leq 0$ small enough and $\omega\in \Omega$.
\end{remark}
\begin{corollary} \label{cor5.4}
Let $H^u=\emptyset$.
Suppose that the zero solution of equation~\eqref{th2} is  asymptotically stable and the initial value $u_0$ is sufficiently small.
Then the zero solution of  equation~\eqref{eq(2.5)} is asymptotically  stable. \end{corollary}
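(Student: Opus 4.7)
The plan is to combine the exponential approximation result of Theorem~\ref{th5.3} with the hypothesis that the reduced equation on the center manifold is asymptotically stable. Since $H^u=\emptyset$, the decomposition simplifies to $H=H^c\oplus H^s$, so for the full solution it suffices to control the two components $P^cu(t,u_0,\omega)$ and $P^su(t,u_0,\omega)$ separately.

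First I would choose a natural shadow orbit on the center manifold. Given $u_0\in H$ sufficiently small, set $v_0:=P^cu_0\in H^c$ and consider $v(t,v_0,\omega)$, the solution of~\eqref{th2}. By Theorem~\ref{th5.3}, for $t$ large enough,
\begin{equation*}
|P^cu(t,u_0,\omega)-v(t,v_0,\omega)|\leq L_1(\omega)\exp(-l_1 t),
\end{equation*}
\begin{equation*}
|P^su(t,u_0,\omega)-P^sh^c(v(t,v_0,\omega),\omega)|\leq L_2(\omega)\exp(-l_2 t).
\end{equation*}
The asymptotic stability hypothesis on~\eqref{th2} says that for $v_0$ sufficiently small, $v(t,v_0,\omega)\to 0$ as $t\to\infty$ and $v(t,v_0,\omega)$ stays small for all $t\geq 0$. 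Since $h^c(\cdot,\omega)$ is Lipschitz with $h^c(0,\omega)=0$, we obtain $|P^sh^c(v(t,v_0,\omega),\omega)|\leq \Lip(h^c)\,|v(t,v_0,\omega)|\to 0$.

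Combining the two bounds via the triangle inequality yields
\begin{equation*}
|u(t,u_0,\omega)|\leq |v(t,v_0,\omega)|+\Lip(h^c)|v(t,v_0,\omega)|+L_1(\omega)e^{-l_1 t}+L_2(\omega)e^{-l_2 t},
\end{equation*}
and the right-hand side tends to zero as $t\to\infty$, giving pathwise attractivity of the origin. For the Lyapunov-stability portion of asymptotic stability (smallness preserved for all $t\geq 0$), I would use continuous dependence of the mild solution of~\eqref{eq(2.5)} on initial data over the finite initial window $[0,T_0]$ (where $T_0$ is the threshold beyond which Theorem~\ref{th5.3} applies) together with the smallness of $|v(t,v_0,\omega)|$ from the hypothesis on~\eqref{th2} for $t\geq T_0$.

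The main obstacle is that Theorem~\ref{th5.3} furnishes only a pathwise bound with $\omega$-dependent constants $L_i(\omega)$, and the bound is stated for ``$t$ large enough'' rather than for all $t\geq 0$. So genuine asymptotic stability must be assembled in two pieces: attractivity comes directly from the display above, while stability requires a separate short-time argument bridging $[0,T_0]$ and then a uniform-in-$t$ estimate for $t\geq T_0$ in which one must check that the implicit dependence of $L_1(\omega),L_2(\omega)$ on $|u_0|$ (via the proof of Theorem~\ref{th5.3}, which bounds them by $|P^su_0-P^sh^c(P^cu_0,\omega)|$ and related quantities) is at worst linear, so that $L_i(\omega)\to 0$ as $|u_0|\to 0$. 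Once this dependence is made explicit, the choice of $\delta$ yielding $|u(t,u_0,\omega)|<\epsilon$ for all $t\geq 0$ is straightforward.
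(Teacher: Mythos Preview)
Your proposal is correct and follows essentially the same route as the paper: both arguments reduce Corollary~\ref{cor5.4} to Theorem~\ref{th5.3}, using the exponential tracking estimates on the $P^c$ and $P^s$ components together with the assumed asymptotic stability of~\eqref{th2}. The paper's proof is a two-sentence invocation of Theorem~\ref{th5.3}, whereas you have spelled out the triangle-inequality decomposition, the role of $h^c(0,\omega)=0$ and the Lipschitz bound on $h^c$, and the short-time bridging argument needed because Theorem~\ref{th5.3} holds only for large~$t$; these details are implicit (or simply omitted) in the paper's version.
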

\begin{proof}
From  Theorem~\ref{th5.3},  we conclude that the zero   solution of equation~\eqref{th2} is asymptotically stable on centre and stable  subspaces respectively.
This establishes Corollary~\ref{cor5.4}.
\end{proof}

\section{Approximation of the center manifolds} \label{Sec6}
In this section, we prove the existence and  approximation of stochastic center manifolds. We avoid  assuming the noise is differentiable in the proof process~\cite{Box1}. Denote~$h(v(t,v_0,\omega),\omega)=:h(v,\omega)$ for the initial value $P^cu(t,u_0,\omega)=:v(t,v_0,\omega)$.
For a $\mathcal{F}$-measurable function $g(v,\omega):H^c\to  H^u\oplus H^s$ which are~$C^1$ in a neighbourhood of origin  in~$H^c$,
define
\begin{align*}
M(g)(v,\omega)={}&g_t(v,\omega)+g_v(v,\omega)[P^cAv+z(\theta_t\omega)v+P^cG(\theta_t\omega,v+ g)]
\\&{}
-Ag-z(\theta_t\omega)g-P^s G(\theta_t\omega, v+g)-P^uG(\theta_t\omega, v+ g),
\end{align*}
where $v\in H^c$.
\begin{theorem}\label{k12}
Suppose that \(g\)~is a  $\mathcal{F}$-measurable function such that $g(0,\omega)=0$\,, $g_v(0,\omega)$ $=0$ and $M(g)(v,\omega)=\Ord{|v|^q}$ for some $q>1$\,.
Then as $v\to 0$\,,
\begin{equation*}|h^c(v,\omega)-g(v,\omega)|=\Ord{|v|^q}.\end{equation*}
\end{theorem}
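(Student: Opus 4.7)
The crucial observation is that $M(h^c) \equiv 0$, since the operator $M$ is precisely the invariance PDE for a graph over the centre subspace: differentiating in $t$ the invariance identity $P^s u(t) + P^u u(t) = h^c(P^c u(t), \theta_t\omega)$ along a solution on the stochastic centre manifold produced in Theorem~\ref{Thm(3.1)} yields exactly $M(h^c) = 0$. Consequently $M(g) - M(h^c) = M(g) = \Ord{|v|^q}$, and this residual is what I will convert into a bound on $g - h^c$.

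Fix $v_0 \in H^c$ small and let $v(t) = v(t, v_0, \omega)$ be the solution of the reduced random equation~\eqref{th2} on the true centre manifold with $v(0) = v_0$. Set $\psi(t) := g(v(t), \theta_t\omega) - h^c(v(t), \theta_t\omega)$. Subtracting $M(h^c) = 0$ from the given residual and using the Lipschitz bound on $G$, the function $\psi(t)$ satisfies a random linear evolution equation of the form
\begin{equation*}
\dot\psi = A\psi + z(\theta_t\omega)\psi + N(t,\psi) + r(t),
\end{equation*}
with $|N(t,\psi)| \le \Lip_u G\cdot|\psi|$ and $|r(t)| \le C|v(t)|^q$. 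Since $\psi$ takes values in $H^s \oplus H^u$, projecting and applying the exponential trichotomy exactly as in the fixed-point step of Theorem~\ref{Thm(3.1)} yields
\begin{equation*}
\psi(0) = \int_{-\infty}^{+\infty} \Psi_0(0,s)\,B(-s)\bigl[N(s,\psi) + r(s)\bigr]\,ds\,.
\end{equation*}

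To close the estimate, I would bound the centre-manifold trajectory by $|v(s)| \le K|v_0|\exp[\eta|s|+\int_0^s z(\theta_r\omega)\,dr]$ with $\eta$ slightly above $\gamma$ and chosen so that $q\eta<\min\{\alpha,\beta\}$ (admissible under the spectral-gap hypotheses of Theorem~\ref{Thm(4.1)}, and for larger $q$ by iterating the argument with successively refined approximants). Against the trichotomy kernel $B(-s)$, the $r$-contribution then integrates pathwise to $\Ord{|v_0|^q}$, while the $N$-contribution is controlled in the same weighted supremum norm used for~\eqref{eq(3.6)} by $K\Lip_u G\bigl(\tfrac{1}{\eta-\gamma}+\tfrac{1}{\beta-\eta}+\tfrac{1}{\alpha-\eta}\bigr)$ times the weighted norm of $\psi$. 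By~\eqref{pr1} this coefficient is strictly less than $1$, so the self-referential estimate absorbs and yields the desired $|\psi(v_0,\omega)| = \Ord{|v_0|^q}$.

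The main obstacle I anticipate is the pathwise control of the random weight $\exp[\int_0^t z(\theta_r\omega)\,dr]$ when raised to the $q$-th power inside $r(t)$: for $q>1$ this amplifies an a priori only sublinearly-growing random factor. The resolution is Lemma~\ref{lem(2.1)}(iii)--(iv), which ensures the exponential trichotomy rates $-\beta$ and $\alpha$ still dominate in the integrals so the argument runs for each $\omega \in \Omega$. A minor additional care is needed because $g$ is only assumed $C^1$ near the origin; consequently the argument runs on a small ball where both $g$ and $h^c$ are defined and $C^1$, which is consistent with the asymptotic $v\to 0$ nature of the claim.
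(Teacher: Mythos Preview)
Your approach is sound and closely mirrors Carr's classical deterministic argument: work along a trajectory $v(t)$ on the centre manifold, use the invariance identity $M(h^c)=0$ to derive a linear inhomogeneous equation for the error $\psi=g-h^c$, then represent $\psi(0)$ via the Lyapunov--Perron kernel $B(-s)$ and close by the same absorption constant~\eqref{pr1}. The paper takes a different route: rather than differentiating $h^c$ along trajectories, it shifts the fixed-point operator $J^c$ of Theorem~\ref{Thm(3.1)} by a compactly supported auxiliary function $y$ (a proxy for~$g$) to obtain $Sw=J^c(w+y,v)-y$, and shows that this contraction maps the set $Y(\omega)=\{w:|w(0,v,\omega)|\le K(\omega)|v|^q\}$ into itself; uniqueness of the fixed point then identifies $h^c-y$ with an element of~$Y$. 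The bulk of the paper's proof is an integration-by-parts computation expressing $y(0,v,\omega)$ itself as a $B(-r)$-integral so that $Sw(0,v,\omega)$ can be bounded.

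Both arguments rest on the same kernel and the same contraction estimate, but the paper's formulation sidesteps any direct appeal to $M(h^c)=0$, and hence to time-differentiability of $h^c(v,\theta_t\omega)$ in the random parameter; you recover this implicitly by writing $(P^s+P^u)u(t)=h^c(v(t),\theta_t\omega)$ for a mild solution~$u$, which is legitimate. Two small points your write-up should make explicit. First, since $g$ is only $C^1$ near the origin while the centre trajectory $v(t)$ grows like $\exp(\eta|t|)$ and leaves every neighbourhood, you must first extend $g$ by a cutoff (exactly the role of the paper's compactly supported~$y$) so that $M(g)$ is globally defined and remains $\Ord{|v|^q}$; this is more than the ``minor additional care'' you flag at the end. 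Second, your bound $|N(t,\psi)|\le \Lip_uG\,|\psi|$ omits the cross-term $g_v[P^cG(\cdot,v+h^c)-P^cG(\cdot,v+g)]$, which contributes an extra factor $(1+\sup|g_v|)$; since $g_v(0,\omega)=0$ this factor is $1+o(1)$ near the origin and the absorption still goes through, but it should be stated.
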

\begin{proof}
Let an  $\mathcal{F}$-measurable function  $y(s,v,\omega)\in C_\eta(\omega)$ and let $y(s,\cdot,\omega):H^c\to H^u\oplus H^s$ be continuous differential with compact support such that
$y(0,0,\omega)=g(0,\omega)=0$\,,  $y_v(0,0,\omega)=g_v(0,\omega)=0$ and $y_v(0,v,\omega)=g_v(v,\omega)$ for $|v|$~small enough.
The function~$y(s,v,\omega)$ connects the functions~$h^c(v,\omega)$ and~$g(v,\omega)$.
Set
\begin{align*}
M(y)(0,v,\omega)={}&y_t(0,v,\omega) +y_v(0,v,\omega)[P^cAv+z(\theta_t\omega)v\\&{}+P^cG(\theta_t\omega, y(0,v,\omega))] -Ay(0,v,\omega)-z(\theta_t\omega)y(0,v,\omega)\\&{}-P^s G(\theta_t\omega, y(0,v,\omega)) -P^uG(\theta_t\omega, y(0,v,\omega)),\end{align*}
where $v\in H^c$.
For a  $\mathcal{F}$-measurable function~$w(s,v,\omega)$, define the operator
\begin{equation*}Sw=J^c (w(s,v,\omega)+y(s,v,\omega),v)-y(s,v,\omega),\end{equation*}
where the operator~$J^c$ is defined by
\begin{align}\label{eq(3.221)}\begin{split}
J^c(u,v):={}&\exp(As)v+\int_0^s
\exp[A(s-r)]
P^cG(\theta_{r}\omega,u(r))\,dr\\&{}
+\int^{+\infty}_{-\infty}B(s-r)\exp(\int_r^sz(\theta_{l}\omega)\,dl)G(\theta_{r}\omega,u(r))\,dr\,.
\end{split}
\end{align}
For a random variable $K(\omega)$, define the domain~$Y$
\begin{equation*}Y(\omega)=\left\{w(s,v,\omega)\in C_\eta(\omega)\mid|w(0,v,\omega)|\leq K(\omega) |v|^q\quad \mbox{for all } v\in H^c\right\}.\end{equation*}
From its construction,  $Y(\omega)$~is closed in~$C_\eta(\omega)$.
Since $J^c$~is a contraction mapping on~$C_\eta(\omega)$, $S$~is a contracting mapping on~$Y(\omega)$.
We prove that there exists a random variable~$K(\omega)$ such that $S$ maps~$Y(\omega)$ into~$Y(\omega)$.
Then by the uniqueness of fixed points and definition of~$h^c$ we conclude the theorem.

 Now we prove $Y(\omega)$ exists.
For $w\in C_\eta(\omega)$,  let $u_c(s,v,\omega)$ be the solution of
\begin{align}\label{eq(6.1)}
&  \frac {du_c} {dt}=Au_c+z(\theta_{s}\omega)u_c+G(\theta_{s}\omega,y(0,u_c,\omega)+w(0,u_c,\omega)),\nonumber\\ &u_c(0,v,\omega)=v.
 \end{align}
     From equation~\eqref{eq(3.221)} and the construction of $h(v,\omega)$,
\begin{align*}
&J^c(y(0,v,\omega)+w(0,v,\omega),0)=\int^{+\infty}_{-\infty}B(-r)\exp\left[\int_r^0 z(\theta_{l}\omega)\,dl\right]\\& G(\theta_{r}\omega,y(0,u_c,\omega)+w(0,u_c,\omega))\,dr\,.
\end{align*}
Since the slowly varying function $y(r,u_c,\omega)\in C_\eta(\omega)$\,,
\begin{align*}&
\left|\exp\left[-Ar+\int_r^0 z(\theta_{l}\omega)\,dl\right]P^sy(0, u_c,
\omega)\right|
\\ &\leq K\exp[(\beta-\eta)r] |y(r,u_c,\omega)|_{C_\eta(\omega)}
\\&\to 0 \quad\text{as }r \to -\infty.
\end{align*}
Also
\begin{align*}&
\left|\exp\left[-Ar+\int_r^0 z(\theta_{l}\omega)\,dl\right]P^uy(0, u_c,
\omega)\right|
\\ &\leq K\exp[-(\alpha-\eta)r] |y(r,u_c,\omega)|_{C_\eta(\omega)}
\\&\to 0 \quad\text{as }r \to +\infty.
\end{align*}
 So
\begin{align*}
&y(0,v,\omega)\\={}&\int_{-\infty}^0\frac d{dr}\left[\exp\left[-Ar+\int_r^0 z(\theta_{l}\omega)\,dl\right]P^sy(0,u_c,\omega)\right]\,dr
\\&{} -\int^{\infty}_0\frac d{dr}\left[\exp\left[-Ar+\int_r^0 z(\theta_{l}\omega)\,dl\right]P^uy(0,u_c,\omega)\right]\,dr
\\  ={}&\int_{-\infty}^0\ \exp\left[-Ar+\int_r^0 z(\theta_{l}\omega)\,dl\right]\bigg[(-A-z(\theta_r\omega))P^sy(0,u_c,\omega)
\\&{} +\frac d{dr}P^sy(0,u_c,\omega)\bigg]\,dr-\int^{\infty}_0\exp\left[-Ar+\int_r^0 z(\theta_{l}\omega)\,dl\right]\\\ &
\left[(-A-z(\theta_r\omega))P^uy(0,u_c,\omega)
 +\frac d{dr}P^uy(0,u_c,\omega)\right]\,dr\,.
 \end{align*}
Here
\begin{align*}
& (-A-z(\theta_r\omega))P^sy(0,u_c(r,v,\omega),\omega)+\frac d{dr}P^sy(0,u_c(r,v,\omega),\omega)\\
={}&(-A-z(\theta_r\omega))P^sy(0,u_c(r,v,\omega),\omega)+P^sy_v(0,u_c(r,v,\omega),\omega)[Au_c\\&{}+z(\theta_r\omega)u_c+P^cG(\theta_r\omega,y(0,u_c,\omega)+w(0,u_c,\omega))]\\&{}+P^sy_r(0,u_c(r,v,\omega),\omega)\\
={}&M(P^sy)-P^sy_v(0,u_c(r,v,\omega),\omega)[Au_c+z(\theta_r\omega)u_c\\&{}+P^cG(\theta_r\omega,y(0,u_c,\omega))] +P^sG(\theta_r\omega,P^sy(0,u_c,\omega))\\&+P^sy_v(0,u_c(r,v,\omega),\omega) \\&\left[Au_c+z(\theta_r\omega)u_c+P^cG(\theta_r\omega,y(0,u_c,\omega)+w(0,u_c,\omega))\right]\\
={}&M(P^sy)-P^sy_v(0,u_c(r,v,\omega),\omega)[P^cG(\theta_r\omega,y(0,u_c,\omega))\\ &{}-P^cG(\theta_r\omega,y(0,u_c,\omega) +w(0,u_c,\omega))]+P^sG(\theta_r\omega,y(0,u_c,\omega)).   \end{align*}
\begin{align*}  & (-A-z(\theta_r\omega))P^uy(0,u_c(r,v,\omega),\omega)+\frac d{dr}P^uy(0,u_c(r,v,\omega),\omega)\\
={}&(-A-z(\theta_r\omega))P^uy(0,u_c(r,v,\omega),\omega)+P^uy_v(0,u_c(r,v,\omega),\omega)\\&{}[Au_c+z(\theta_r\omega)u_c+P^cG(\theta_r\omega,y(0,u_c,\omega)+w(0,u_c,\omega))]\\&{}+P^uy_r(0,u_c(r,v,\omega),\omega)\\={}&M(P^uy)-P^uy_v(0,u_c(s,v,\omega),\omega)[P^cG(\omega,y(0,u_c,\omega))\\&{} -P^cG(\theta_r\theta_r\omega,y(0,u_c,\omega)+w(0,u_c,\omega))]+P^uG(\theta_r\omega,y(0,u_c,\omega)).  \end{align*}
From the above calculations
\begin{align*}
&J^c(y(0,v,\omega)+w(0,v,\omega),0)-y(0,v,\omega)\\
&=\int_{-\infty}^0\exp\left[-Ar+\int_r^0 z(\theta_{l}\omega)\,dl\right]Q^s\,dr
+\int^{\infty}_0\exp\left[-Ar+\int_r^0 z(\theta_{l}\omega)\,dl\right]Q^u\,dr\,,
 \end{align*}
 where $Q^s$ and $Q^u$ is \begin{align*}
 Q^s(y,w,\omega)={}&P^sy_v(0,u_c(r,v,\omega),\omega)[P^cG(\theta_r\omega,y(0,u_c,\omega))\\&{}-P^cG(\theta_r\omega,y(0,u_c,\omega) +w(0,u_c,\omega))]
-M(P^sy)\\&{} +[P^sG(\theta_r\omega,y(0,u_c,\omega)+w(0,u_c,\omega))-P^sG(\theta_r\omega,y(0,u_c,\omega))],\\
Q^u(y,w,\omega)={}&M(P^uy)-P^uy_v(0,u_c(r,v,\omega),\omega)[P^cG(\theta_r\omega,y(0,u_c,\omega))\\&\-P^cG(\theta_r\omega,y(0,u_c,\omega)+w(0,u_c,\omega))]
-[P^uG(\theta_r\omega,y(0,u_c,\omega)\\&{} +w(0,u_c,\omega))-P^uG(\theta_r\omega,y(0,u_c,\omega))].
\end{align*}
Since $g_v(0,\omega)=0$ and $g$ is ~$C^1$ in a neighborhood of origin  in~$H^c$, for a small random variable~$\epsilon(\omega) $,
\begin{eqnarray*}|g_v(v,\omega)|<\epsilon(\omega)\end{eqnarray*} if $|v| $ small enough.
From $M(y)(0,u_c,\omega)=C_1(\omega)|u_c|^q$ for a positive random variable~$C_1(\omega)$ and all $u_c\in H^c$, we obtain
$|Q^s|\leq C_2(\omega)|u_c|^q$ and  $|Q^u|\leq C_3(\omega)|u_c|^q$ for some random variables $C_2(\omega)$, $C_3(\omega)$,.
Since $u_c$ is the solution of  equation~\eqref{eq(6.1)}, from the Gronwall's  inequality  we  conclude  $|u_c|\leq C_4(\omega)\exp(-cr)|v|$ for a positive constant $c$ and random variable $C_4(\omega)$.

Since the random variables
\begin{eqnarray*}\int_{-\infty}^0\exp(\int_r^0 z(\theta_{l}\omega)\,dl)\,dr<+\infty, \int^{\infty}_0\exp(\int_r^0 z(\theta_{l}\omega)\,dl)\,dr<+\infty, \end{eqnarray*}
so $|Sw(0,v,\omega)|=|J^c(w(0,v,\omega)+y(0,v,\omega),0)-y(0,v,\omega)|\leq C_5(\omega)|u_c|^q\leq K(\omega)|v|^q$ by selecting suitable random variables $C_5(\omega)$, $K(\omega)$.
\end{proof}

\section{Examples show our results} \label{Sec7}
Example~\ref{exam1} shows that we can use computer algebra~\cite{Rob} to directly compute stochastic center manifolds in a wide class of \spde{}s.
Example~\ref{sec:sdwe} establishes that a semilinear damped wave equation has a stochastic center manifold.
Lastly, Example~\ref{sec:idcm} explores a case where the stochastic center manifold itself is infinite dimensional.

\subsection{Stochastic reaction-diffusion modelling}
\label{exam1}

Consider the stochastic reaction-diffusion equation
\begin{equation}\label{pa}
 u_{t}= u_{xx}+u- a  u^3+ \sigma u\circ\dot{W}
\quad\text{with}\quad
 u(0,t)=u(\pi,t)=0\,,  \quad t\geq 0\,.
\end{equation}
We set the space $H:=L^2(0,\pi)$, the operator $Au= {d^2u}/{dx^2}+u$ with domain $D(A):=H^2(0,\pi)\cap H_0^1(0,\pi)$.
Then
the spectrum of the operator~$A$ is
\begin{equation*}\sigma(A)=\{0,-3,-8,-15,\ldots\}=\{1-n^2\mid n=1,2,\ldots\}. \end{equation*}
The corresponding complete and orthogonal  eigenfunctions  are~$\sin nx$\,, $n=1,2,\ldots$\,.
The operator~$A$  satisfies the exponential trichotomy (Definition~\ref{trich}): the center subspace
$H^c=\operatorname{span}\{\sin x\}$, the stable subspace $H^s=\operatorname{span}\{\sin nx,$ $n=2,3,\ldots\}$,  and the unstable subspace $H^u=\emptyset$.

Recall that the random variable $z(\theta_t\omega)$ is the stationary solution of  the Ornstein--Uhlenbeck \sde~\eqref{eq(2.4)}.
In order to transform the random parabolic equation~\eqref{pa}, from the coordinate transform~\eqref{eq(2.7)} we let the transform $u=u^*\exp(\sigma z(\theta_t\omega))$.
For simplicity omit the~$*$ from here on,
the transformed random parabolic equation is
\begin{equation}\label{pa1}
 u_{t}= u_{xx}+u+\sigma z(\theta_t\omega)u-a\exp[2\sigma z(\theta_t\omega)] u^3.
\end{equation}

Select the nonlinearity parameter~$a$ small enough such the conditions of  Theorem~\ref{Thm(3.1)} is satisfied.
Then Theorem~\ref{Thm(3.1)} assures us that there exists a slow (center) manifold
\begin{equation*}M^c(\omega)=\{s\sin x+h^c(s\sin x,\omega)\}=\left \{s \sin x + \sum_{n=2}^\infty c_n(s,\omega)\sin nx\right \},\end{equation*}
where $c_n(s,\omega)=\Ord{s^3}$ as $s\to 0$ for $\omega\in \Omega$\,.
The random dynamical system on the slow manifold~$M^c(\omega)$ is governed by the following equation.

\begin{equation}
\frac {ds}{dt} \sin x=s\sigma z(\theta_t\omega)\sin x
-\frac 3 4{\exp[2\sigma z(\theta_t\omega)]} as^3\sin x
+\Ord{s^5}.  \label{pa2}
\end{equation}
Equation \eqref{pa2} has a common factor of $\sin x$ in each term, it follows that
\begin{equation*}
\frac {ds}{dt}=s\sigma z(\theta_t\omega)-\frac 3 4 {\exp[2\sigma z(\theta_t\omega)]}  as^3+\Ord{s^5}.
\end{equation*}
We now  calculate the slow manifold~$h^c(s\sin x,\omega)$.
For $\omega\in \Omega$, $c_n(s,\omega)$ is odd functions of  $s$ since $-u$ is a solution of equation~\eqref{pa1}. The coefficient $c_n(s,\omega)$ is zero for odd term,
so
\begin{equation*}h^c(s\sin x,\omega)= c_3(s,\omega)\sin 3x+\Ord{s^5}.
\end{equation*}
From $c_3(s,\omega)=\Ord{s^3}$,  we obtain 
\begin{eqnarray*}
a\exp[2\sigma z(\theta_t\omega)](s\sin x+c_3\sin 3x)^3
 = {as^3}
 \exp[2\sigma z(\theta_t\omega)]\sin ^3 x+\Ord{s^5}.\end{eqnarray*}
So \begin{align*}&P^cG(\theta_t\omega,s\sin x+h^c)=\frac 3 4 as^3\exp[2\sigma z(\theta_t\omega)]\sin x+\Ord{s^5},
\\&
P^sG(\theta_t\omega,s\sin x+h^c)=-\frac 1 {4} as^3\exp[2\sigma z(\theta_t\omega)]\sin 3x+\Ord{s^5}.\end{align*}
From the formulation of approximation of  center manifold,
\begin{align*}
M(h^c)(v,\omega)={}&\,h_t^c(v,\omega)+h_v^c(v,\omega)[P^cAv+z(\theta_t\omega)v+P^cG(\theta_t\omega, v+h^c)]-Ah^c\\&{}-z(\theta_t\omega)h^c -P^s G(\theta_t\omega,  v+h^c)-P^uG(\theta_t\omega,  v+h^c),
\end{align*}
there exists an stochastic approximation of center manifold of equation \eqref{pa1}.

Now we directly use  computer algebra to compute the stochastic center manifold~\cite{Rob}.
Define the Ornstein--Uhlenbeck processes $\phi_k(t)=\exp[-(k^2-1)t]\star \dot W(t)$,
 then
\begin{eqnarray}&&
  u= s\sin x + \frac 1 {32} as^3 \sin 3x -\frac 1 {16} as^3\sigma \phi_3(t)\sin 3x+\Ord{s^5, a^2,\sigma^2}\label{cas2}\\
&&\mbox{such that}\quad \frac {ds}{dt}= s\sigma \dot W(t) -\frac 3 4 as^3 +  \Ord{s^5, a^3,\sigma^2}.  \nonumber
 \end{eqnarray}

\subsection{Semilinear damped wave dynamics}
\label{sec:sdwe}

Wang and Duan~\cite{ Wang}  discussed a semilinear wave equation with a two dimensional center-unstable manifold.
We establish that the   semilinear damped wave equation has a one dimensional center manifold.    

Consider the \spde
\begin{equation}\label{HypM}
 u_{tt}+ u_t= \frac 1 4 u_{xx}+u+f(u)+  u\circ\dot{W},
  \quad x\in (0,2\pi),  \quad t\geq 0\,,
\end{equation}
with $u(0,t)=u(2\pi, t)=0$\,.
$W(t)$~is the standard $\mathbb{R}$-valued Wiener process on a probability space~$(\Omega,\mathcal{F}, \mathbb{P} )$.
The nonlinearity term $f$ is smooth and $f(0)=0$.

Let the space ${H}=H_0^1(0,2\pi)\times L^2(0,2\pi)$.
Rewrite the
\spde~(\ref{HypM}) as the following first order stochastic
evolution equations in~${H}$:
\begin{eqnarray}
du&=&v\,dt\,,\label{shyp1}\\
dv&=&\big[ \frac 1 4   u_{xx}+ u- v+f(u)\big]dt+  u\circ dW(t). \label{shyp2}
\end{eqnarray}

First we prove that the system~(\ref{shyp1})--(\ref{shyp2}) generates
a continuous random dynamical system in~${H}$.
Let
$\Psi_1(t)=u(t)$, $\Psi_2(t)=v(t)-  u(t) z(\theta_t\omega)$, where
$z(\omega)$~is the stationary solution of~(\ref{eq(2.4)}) with $\mu=1$.
Then equations \eqref{shyp1} and \eqref{shyp2} are transformed to
 the following random evolution equations
\begin{align}
d\Psi_1&=\big[\Psi_2+  \Psi_1 z(\theta_t\omega)\big]dt\,,\label{rhpy1}\\
d\Psi_2&=\big[\frac 1 4\frac {d^2\Psi_1}{dx^2}+\Psi_1-\Psi_2+f(\Psi_1)\big]dt+\big[z(\theta_t\omega)\Psi_1- z^2(\theta_t\omega)\Psi_1
-  z(\theta_t\omega)\Psi_2
\big]dt\label{rhyp2}\,.
\end{align}
Let $\Psi(t,\omega)=(\Psi_1(t,\omega),
\Psi_2(t,\omega))\in H$\,, then $\varphi(t, \Psi(0, \omega),\omega)=\Psi(t,\omega)$
defines a continuous random dynamical system in~${H}$.
Notice that the stochastic system~(\ref{shyp1})--(\ref{shyp2}) is
conjugated to the random system~(\ref{rhpy1})--(\ref{rhyp2}) by the
homeomorphism
\begin{equation*}
T(\omega, (u, v))=(u, v+uz(\omega)), \quad (u, v)\in{H}
\end{equation*}
with inverse
\begin{equation*}
T^{-1}(\omega, (u, v))=(u, v-uz(\omega)), \quad (u,
v)\in{H}\,.
\end{equation*}
From Lemma \ref{lem(2.2)}, $\hat{\varphi}(t,\omega, (u_0, v_0))=T(\theta_t\omega,
\varphi(t,\omega, T^{-1}(\omega,(u_0, v_0) )))$ is a random
dynamical system generated by~(\ref{shyp1})--(\ref{shyp2}).
Define \begin{align*}&
 \mathcal{A}=\begin{bmatrix}     0, &  {\operatorname{id}}  \\
    \frac 1 4 \frac {\partial^2}{\partial x^2}+\operatorname{id}, &  -\operatorname{id} \end{bmatrix},
    \quad F(\Psi)= \begin{bmatrix} 0   \\
    f(\Psi_1) \end{bmatrix},
    \\&
     Z(\theta_t\omega)=\begin{bmatrix} z(\theta_t\omega), & 0 \\
 -  z^2(\theta_t\omega), & -
z(\theta_t\omega) \end{bmatrix},
\end{align*}
where $\operatorname{id}$~is the identity operator on the Hilbert
space~$L^2(0,2\pi)$.
Then the random system~(\ref{rhpy1})--(\ref{rhyp2}) can be written as
\begin{equation}\label{RHPY}
\frac{d\Psi}{dt}=\mathcal{A}\Psi+Z(\theta_t\omega)\Psi+F(\Psi).
\end{equation}
The operator~$\mathcal{A}$ has the eigenvalues
\begin{equation*}
\delta_k^\pm=-\frac{1}{2}\pm\sqrt{\frac{5}{4}-\frac 1 4
k^2}, \quad k=1,2,\ldots\,,
\end{equation*}
and corresponding eigenvectors are $q^{\pm}_k=(1, \delta_k^\pm)\sin kx $\,.
 The operator~$\mathcal{A}$ has one zero
eigenvalue $\delta_2^+=0$\,, one positive eigenvalue
$\delta_1^+=\frac{1}{2}$ and the others are all complex numbers
with negative real part.
By using a new inner product~\cite{Wang},  we  define an equivalent norm on~${H}$ with the eigenvectors $\{(1, \delta_k^\pm)\sin kx \}$ orthogonal in this norm.

A   calculation yields that, in terms of the new norm, the
semigroup~$\mathcal{S}(t)$ generated by~$\mathcal{A}$
satisfies the exponent trichotomy with $\alpha=\frac 1 2+\epsilon$\,, $\gamma=\frac 1 4 +\epsilon$
$\beta=\frac{1}{2}-\epsilon$ for parameter~$\epsilon$ small enough, and $\dim H^c=\dim {H}^{u}=1$\,. 

Let $\Psi=rq_1^++sq_2^++y$\,, $y\in P^s H$\,.  For application the theory of stochastic center manifolds,
projecting the random system~\eqref{RHPY} to the center subspace span$\{q^+_2\}$ by the projection $P^c$, 
\begin{eqnarray}
\dot{s}q_2^+=Z(\theta_t\omega)sq_2^++P^cF(\theta_t\omega, rq_1^++sq_2^++y).
\end{eqnarray}
By a cut-off function,  $\Lip  F$ is sufficient small.
From Theorem~\ref{Thm(3.1)} we obtain a center manifold~$M(\omega)$ which we denote as the graph of a
random Lipschitz map~$h^c(sq_2^+, \omega)$.

The asymptotic behavior of the random system~\eqref{RHPY} is governed by  the one dimensional equation
\begin{eqnarray} \label{eqe}
\dot{s}q_2^+=Z(\theta_t\omega)sq_2^++P^cF(\theta_t\omega, sq_2^++h^c(sq_2^+, \omega)).
\end{eqnarray}
From Theorem~\ref{th5.3}, 
we know that  the long time  asymptotic behavior of the random system~\eqref{RHPY} is the same as the equation \eqref{eqe}.

\subsection{An infinite dimensional center manifold}
\label{sec:idcm}

There are very few rigorous results involving infinite dimensional center manifolds except the existence theorem of Gallay~\cite{Gal} and the cylindrical analysis of Mielke~\cite{Mie}.
However, applied mathematicians and engineers implicitly, in effect, assume an infinite dimensional center manifold whenever they invoke the method of multiple scales for slowly varying solutions~\cite[e.g.]{Roberts88a, Roberts92c}.
As a basic example let's consider the coupled \spde{}s
\begin{equation}\label{1}
\D tu=au-uv+\sigma u \circ \dot W\,,\quad
\D tv=-v+\DD xv+u^2-2\cK(u^2v)+\sigma v \circ \dot W\,,
\end{equation}
with Neumann boundary conditions $v_z=0$ at $z=0,\pi$\,, and the  operator $\cK=(1+2a-\partial_{zz})^{-1}$.
For example, for the specific case $a=0$,
\begin{equation*}
\cKo v=\int_0^\pi K_0(z,\zeta)v(\zeta)\,d\zeta
\quad\text{where }K_0=\begin{cases}
\frac{\cos(\pi-\zeta)\cos z}{\sin\pi}\,,&z<\zeta\,,\\
\frac{\cos(\pi-z)\cos\zeta}{\sin\pi}\,,&z>\zeta\,,
\end{cases}
\end{equation*}
for which $\csc\pi\leq K_0\leq\cot\pi$ and $\int_0^\pi K_0\,d\zeta=1$\,.

Let the transform $u=u^*\exp(\sigma z(\theta_t\omega))$, where the random variable $z(\theta_t\omega)$ is the stationary solution of  the Ornstein--Uhlenbeck \sde~(\ref{eq(22.4)}).
\begin{equation}\label{eq(22.4)}
dz+z\,dt=dW.
\end{equation}
Transform the \spde{}s~\eqref{1} to the following system of \spde{}s with random coefficients:
\begin{eqnarray}\label{2}
\D tu&=&au+\sigma z(\theta_t\omega)u-\exp(\sigma z(\theta_t\omega))uv,\nonumber \\
\D tv&=&-v+\DD xv+\sigma z(\theta_t\omega)v+\exp(\sigma z(\theta_t\omega))u^2-2\exp(2\sigma z(\theta_t\omega))\cK(u^2v)\,, \nonumber \\
\end{eqnarray}

If  $a=0$,  from Theorem~\ref{Thm(3.1)}, there exists  an infinite dimension  stochastic slow manifold.  Assume the stochastic slow manifold is $v=h^c(u,\omega)$, then from Theorem  \ref{k12}, the exact slow manifold satisfies
\begin{eqnarray*}&&
h^c_t(u,\omega)+h_u^c(u,\omega)[\sigma z(\theta_t\omega)u-\exp(\sigma z(\omega))uh^c(u,\omega)]
\\&&{}
+(h^c(u,\omega)-\DD xh^c(u,\omega))
-\sigma z(\theta_t\omega)h^c(u,\omega)-\exp(\sigma z(\theta_t\omega))u^2
\\&&{}
+2\exp(2\sigma z(\theta_t\omega))\cK(u^2h^c(u,\omega))=0,
\end{eqnarray*}
for $\sigma$ small enough.
From the construction of operator~$\cKo$, the above system of \spde{}s~\eqref{2} has a stochastic slow manifold of
\begin{eqnarray*}v=h(u,\omega)=\exp(\sigma z(\theta_t\omega))\cKo u^2.\end{eqnarray*}
By the equation \eqref{th2},  the evolution on the center manifold is governed by the \spde
\begin{equation*}
\D tu=\sigma z(\theta_t\omega)u-\exp(2\sigma z(\theta_t\omega))u\cKo u^2.
\end{equation*}


\paragraph{Acknowledgement} This work was supported by the Australian Research Council   grants DP0774311 and DP0988738, and by the NSF grant 1025422.

\end{document}